\definecolor{ForestGreen}{rgb}{0.15,0.416,0.18}
\definecolor{EgyptBlue}{rgb}{0.063,0.2,0.65}
\definecolor{ForestGreen}{rgb}{0.15,0.416,0.18}
\definecolor{EgyptBlue}{rgb}{0.063,0.2,0.65}
\newtheorem{theorem}{Theorem}[section]
\newtheorem{lemma}[theorem]{Lemma}
\newtheorem{proposition}[theorem]{Proposition}
\theoremstyle{definition}
\theoremstyle{definition}
\newtheorem{remark}[theorem]{Remark}
\theoremstyle{definition}
\numberwithin{equation}{section}
\numberwithin{table}{section}
\numberwithin{figure}{section}
\title{The uniqueness and concentration behavior of solutions for  a  nonlinear fractional Schr\"odinger system}
\author[a]{\textbf{Chungen Liu\footnote{Email addresse: liucg@nankai.edu.cn}}}
\author[a]{\textbf{Zhigao Zhang\footnote{Email addresse: zhangzhigao@e.gzhu.edu.cn}}}
\author[a]{\textbf{Jiabin Zuo\footnote{Corresponding author. Email \textcolor{red}{address}: E-mail:zuojiabin88@163.com}}}
\affil[a]{School of Mathematics and Information Science, Guangzhou University, Guangzhou, 510006, PR China}
\renewcommand{\maketitle}{\bgroup\setlength{\parindent}{0pt}

	\vspace{1truecm}
	\begin{center}{\vbox{\titlefont\@title}}\end{center}
	\vspace{0.5truecm}
	\begin{center}{\@author} \end{center}
	
	\egroup
}
\renewcommand{\@fnsymbol}[1]{%
	\ifcase#1 \or {\,\Letter\!} \or\textasteriskcentered\or \textasteriskcentered\textasteriskcentered
	\else\@ctrerr\fi}
\newcommand*{\titlefont}{\fontsize{18}{21.6}\selectfont\textbf}
\renewcommand\@author{\ifx\AB@affillist\AB@empty\AB@author\else
	\ifnum\value{affil}>\value{Maxaffil}\def\rlap##1{##1}%
	\AB@authlist\\[\affilsep]\vbox{\AB@affillist}
	\else  \AB@authors\fi\fi}
\begin{document}
	
	\maketitle
	
	\pagestyle{plain}

	\begin{center}
		\noindent
		\begin{minipage}{0.85\textwidth}\parindent=15.5pt
			
			\bigskip
			
			{\small{
					\noindent {\bf Abstract.} The paper is concerned with a nonlinear system of two coupled fractional Schr\"odinger equations  with both attractive intraspecies and attractive interspecies interactions in $\mathbb{R}$. By analyzing  an associated $L^2$-constrained minimization problem, the  uniqueness of solutions to this system is proved via the implicit
					function theorem.  Under a certain type of trapping potential, by establishing some delicate
					energy estimates, we present  a detailed analysis on the  concentration behavior of the solutions as the total 
					strength of intraspecies and interspecies  interactions tends to a critical value, where each component of the solutions blows up and concentrates at a flattest common minimum point  of the
					associated trapping potentials.  An optimal
					blow-up rate of solutions to the system is also given.}
				\smallskip
				
				\noindent {\bf{Keywords:}} nonlinear Schr\"odinger system, fractional Laplacian,  normalized \textcolor{red}{solution}, concentration behavior. 
				\smallskip
				
				\noindent{\bf{2010 Mathematics Subject Classification:}} 35R11, 35J50, 35Q40.
			}
			
		\end{minipage}
	\end{center}
	\section{Introduction}\label{Intro}
	\hspace{1.5em}In this  paper, we consider the following nonlinear system of two coupled fractional Schrödinger equations in  $\mathbb{R}$:\\
	\begin{equation}\label{system1.1}
		\begin{cases}
			\sqrt{-\Delta }u_{1}+V_{1}(x)u_{1} =\mu_{1}u_{1}+a_{1} u_{1}^{3}+\beta u_{2}^{2}u_ {1},\\
			\sqrt{-\Delta }u_{2}+V_{2}(x)u_{2} =\mu_{2}u_{2}+a_{2} u_{2}^{3}+\beta u_{1}^{2}u_ {2},
		\end{cases}
	\end{equation}\\
	where for $i=1,2$, $\mu_i\in \mathbb{{R}}$ is a  suitable Lagrange multiplier which denotes the chemical potential, $V_i(x)$ is a certain type of trapping potential,  $a_i>0$ $(resp. < 0)$ corresponds to the attractive $(resp.\ repulsive)$ intraspecies interaction inside each component, and 
	 $\beta>0$ $(resp. < 0)$ represents the attractive $(resp.\ repulsive)$ interspecies interaction between the two components.

	The  system (\ref{system1.1}) arises  from the following time-dependent nonlinear fractional Schr\"{o}dinger equation
	\begin{equation*}\label{ori}
		i\partial_t \psi(x,t) = (-\Delta)^{\alpha/2} \psi + V(x)\psi + \beta |\psi|^2 \psi, \quad x \in \mathbb{R},\ t > 0,
\end{equation*}
which is introduced to model non-Gaussian Bose-Einstein condensates (BEC), where bosons follow Lévy flights instead of Gaussian paths, see \cite{T14}. Here, $\alpha$ is the Lévy index which measures self-similarity
in the Lévy fractal path,  $V(x)$ denotes the external trapping potential, $\beta $ describes the strength of local (or short-range) interactions between particles, $\psi(x,t)$ is a complex-valued wave function given by $\psi(x,t) = e^{-i\mu t} u(x)$, where $u(x)$ is a solution of  the fractional single-component equation
	
	\begin{equation}\label{ori2}
		(-\Delta)^s u + V(x)u = \mu u + \beta u^3, \quad \text{in } \mathbb{R}.
\end{equation}
   For more physical background, we refer the reader to references \cite{T9, T13} and the references therein.

   In recent years, under various conditions
   on $V(x)$ and $\mu$, the analogous problems to (\ref{ori2}) were  widely investigated.  For the case $V(x) = 0$, the existence of solutions was considered in \cite{T11, T8,  T15} for different growth conditions of nonlinearities; while for $V(x)\neq 0$, one has to take into account the effect
   of the potential $V(x)$ besides the nonlinearity. As shown in \cite{T21, T25, T20}, which focused on fractional Schr\"odinger equations with various potentials, the existence and non-existence of $L^2$-normalized solutions were established, and a detailed analysis of the concentration behavior of solutions was also presented. Moreover, the frequency $\mu$ is also a critical parameter, and there exist two substantially distinct approaches to it.   One is to regard it as fixed, as  studied in \cite{ T4,  T12}; the alternative is to treat it as unknown, where it appears as a Lagrange multiplier associated with the prescribed mass constraint. The latter approach  is particularly interesting from a physical perspective, since the mass often has a clear physical meaning, with relevant research  presented  in \cite{T1,  T6, T10, T5}.
   
   Note that within the multiple-component framework, interspecies interactions between the two components give rise to  more elaborate physical phenomena and  render the corresponding analytic investigations more challenging.  When (\ref{system1.1}) reduces to a Laplacian system, that is,	\begin{equation}\label{system2.1}
   	\begin{cases}
   		-\Delta u_{1}+V_{1}(x)u_{1} =\mu_{1}u_{1}+b_{1} u_{1}^{3}+\beta u_{2}^{2}u_ {1},\\
   		-\Delta u_{2}+V_{2}(x)u_{2} =\mu_{2}u_{2}+b_{2} u_{2}^{3}+\beta u_{1}^{2}u_ {2},
   	\end{cases}
   \end{equation}\\
     it becomes a classical model in $\mathbb{{R}}^2$ describing two-component   Bose-Einstein condensates, which was studied  in \cite{T18, T28} for the case where the  intraspecies and  interspecies interactions  are both
     attractive (i.e., $b_1,b_2,\beta>0$).  Under the constraint condition \begin{equation*}\label{M1}
   	 \left \{ (u_1,u_2)\in \tilde{\mathcal{H}}_1\times\tilde{\mathcal{H}}_2:\int_{\mathbb{R}^2 }\left| u_1\right|^{2}dx = \int_{\mathbb{R}^2 }\left|u_2\right|^{2}dx =1\right \},
   \end{equation*}
    where $\tilde{\mathcal{H}}_i=\{ u\in H^{1}(\mathbb{R}^2): \int_{\mathbb{R}^2 }V_{i}(x)\left|u\right|^2dx<\infty \}$ $(i=1,2)$, in \cite{T18}, Guo, Zeng and  Zhou  analyzed how the existence, non-existence and uniqueness of solutions to (\ref{system2.1}) depend on the  parameters $b_1$, $b_2$ and $\beta$, and   further showed that, as the total 
    strength of intraspecies and interspecies  interactions tends to a critical value, each component of the solutions blows up at the same point, and under  a specific scaling transformation, converges to the same limit function in $H^{1}(\mathbb{R}^2)$. Separately, in \cite{T28},  under a different constraint condition  
   $ 	\{ (u_1,u_2)\in \tilde{\mathcal{H}}_1\times\tilde{\mathcal{H}}_2:\int_{\mathbb{R}^2 }( \left|u_1\right|^{2} 
   + \left|u_2\right|^{2}) dx =1\}$, the  Laplacian system (\ref{system2.1}) with $\mu_{1}=\mu_{2}$ was investigated.  Not only was a   
   complete classification of the existence and non-existence of ground states  provided, but the  uniqueness, mass concentration, and symmetry-breaking of ground states  were also  established under different types of trapping potentials as the  interspecies interaction strength approached  a critical value. In addition, Guo, Zeng and  Zhou  generalized the results of \cite{T18} to system (\ref{system2.1}) with attractive intraspecies and repulsive interspecies interactions $(\beta<0)$ in \cite{T19}, and notably, the asymptotic analysis differs distinctly from that in \cite{T18, T28} due to the opposite signs of these two types of interactions. It can be  seen that the uniqueness and   decay rate of the positive solution to the associated classical equation, together with the strong maximum principle, play a key role in the concentration behavior of solutions  in \cite{T18, T19, T28}. 
   
\textcolor{red}{   We also note that in recent work \cite{T new},  the authors decompose the energy functional and, via refined estimates, prove the Palais-Smale condition, thereby obtaining existence, multiplicity, and asymptotic results of solutions for double critical fractional Schr\"odinger-Poisson systems. The simultaneous appearance of double critical exponents and non-local terms requires control of the relationship between the two critical exponents, which differs from the setting in systems (\ref{system1.1}) and (\ref{system2.1}).}
   
    Motivated by \cite{T18, T19}, Liu, Zhang and Zuo  considered the fractional Laplacian system (\ref{system1.1})  in \cite{T27}, where various existence and non-existence results were obtained under appropriate ranges for the parameters $a_1>0$, $a_2>0$, and $\beta\in \mathbb{{R}}$. However,  the  uniqueness and  concentration behavior of solutions  remain unexplored. To address this gap,  we  focus  on these properties of  solutions to the system 
   (\ref{system1.1}) with attractive   intraspecies and interspecies interactions, i.e.,  $a_1>0$, $a_2>0$, and $\beta>0$. The case of $\beta<0$ is left to interested readers for future investigation.  
   
   In this paper,    trapping potentials $V_i(x)$ satisfy the following condition\\

   $(\mathcal{D})$ $V_{i}(x)\in L^{\infty } _{loc}(\mathbb{R}) $,  $\displaystyle\lim_{\lvert  x\rvert \rightarrow \infty }V_{i}(x)=\infty $  and  $\mathop{\mathrm{inf\mathrm{} } }\limits _{x\in \mathbb{R} }V_{i}(x)=0$,\ \  $i=1,2$.
   
  \noindent 
   Moreover, we assume throughout the paper that  both  $\mathop{\mathrm{inf\mathrm{} } } _{x\in \mathbb{R} }\big(V_{1}(x)+V_{2}(x)\big)$  and  $\mathop{\mathrm{inf\mathrm{} } } _{x\in \mathbb{R} }V_{i}(x) \ (i=1,2)$ are  attained.
     
   Before  formulating the main results of this paper, we first give some notations and recall some known results. Let the $\frac{1}{2}$-Laplace operator be defined in the Sobolev space $H^\frac{1}{2}(\mathbb{R})$ by the
   Fourier transform, i.e.,
   \begin{equation*}\label{four}
   	\sqrt{-\Delta  } u:=\mathscr{F}^{-1}(\left | \xi  \right | \mathscr{F}u(\xi)),\ \ u\in H^{\frac{1}{2} }(\mathbb{R}),
   \end{equation*}
   in which $	\mathscr{F}u(\xi )$ represents the Fourier transform of $u$, i.e.,
   \begin{equation*}
   	\mathscr{F}u(\xi )=\frac{1}{\sqrt{2\pi}}\int_{\mathbb{R}}\exp^{- i\xi \cdot x}u(x)dx,\ \ u\in H^{\frac{1}{2} }(\mathbb{R}),
   \end{equation*}
   and $H^{\frac{1}{2} }(\mathbb{R})$ is defined as 
   \begin{equation*}
   	H^{\frac{1}{2} }(\mathbb{R}):=\{ u\in L^{2} (\mathbb{R} ):\left \| u\right \|_{ H^{\frac{1}{2} }(\mathbb{R})}^{2}:=\int_{\mathbb{R}}(1+2\pi\left | \xi  \right | ) \Big| \mathscr{F}u(\xi)  \Big|^{2}d\xi< \infty   \},
   \end{equation*}
   with  the norm  $\left \| \cdot  \right \| _{H^{\frac{1}{2}}(\mathbb{{R}})} $    given by
   \begin{equation*}
   	\left \| u  \right \| _{H^{\frac{1}{2}}(\mathbb{{R}})}^2 :=\int_{\mathbb{R}}\Big[u(-\Delta )^{\frac{1}{2} } u+u^{2}\Big]dx=\int_{\mathbb{R}}\Big[|(-\Delta )^{\frac{1}{4} } u|^{2}+u^{2}\Big]dx.
    \end{equation*}

It is known that (\ref{system1.1}) is the system of Euler-Lagrange equations for the following constrained
minimization problem
\begin{equation}\label{problem 1.7}	
	\hat{e} (a_{1}, a_{2}, \beta):=\mathop{\mathrm{inf\mathrm{} } }\limits _{(u_{1}, u_{2})\in \mathcal{M}}E_{a_{1}, a_{2} , \beta}(u_{1}, u_{2}),
\end{equation}
where the energy functional $	E_{a_{1}, a_{2} , \beta}(u_{1}, u_{2})$ is defined by
\begin{align}\label{eq1.9}
	E_{a_{1}, a_{2} , \beta}(u_{1}, u_{2})=&\sum_{i=1}^{2} \int_{\mathbb{R}} [u_{i}\sqrt{-\Delta  } u_{i}+V_{i}(x)u_{i}^{2}]dx - \sum_{i=1}^{2}\frac{a_{i}}{2}\int_{\mathbb{R}}\left | u_{i} \right |^{4}dx \nonumber  \\
	&-\beta\int_{\mathbb{R}}  \left | u_{1} \right |^{2}\left | u_{2} \right |^{2} dx,\quad \text{for any } (u_1, u_2)\in \mathcal{X}.
\end{align}
Here,  $\mathcal{M}$ is the so-called mass constraint   
\begin{equation}\label{M}
	\mathcal{M} =\left \{ (u,v)\in \mathcal{X}:\int_{\mathbb{R} }u^{2}dx = \int_{\mathbb{R} }v^{2}dx =1\right \}
\end{equation}
with
\begin{equation*}
	\mathcal{X}=\mathcal{H}_{1}\times\mathcal{H}_{2},\ \  \mathcal{H}_{i}=\left\{ u\in H^{\frac{1}{2} }(\mathbb{R}): \int_{\mathbb{R} }V_{i}(x)u^{2}dx<\infty \right\}\ \ (i=1,2) \quad
\end{equation*}
and 
\begin{equation*}\label{eq1.8}
	\left \| u  \right \| _{\mathcal{H}_{i} }^{2}=\int_{\mathbb{R} }\left [  | (-\Delta  ) ^{\frac{1}{4} }u|^{2}+V_{i}(x)u^{2} \right ]dx, \ \  \left \| u \right \|_{\mathcal{X}}^{2}=\left \| u \right \|_{\mathcal{H}_{1}}^{2}+\left \| u \right \|_{\mathcal{H}_{2}}^{2},\quad i=1,2.
\end{equation*}
The classical equation associated to system (\ref{system1.1}) is known as 
	\begin{equation}\label{classical eq}
	\sqrt{-\Delta }u+u=u^{3},\ \ \:u\in H^{\frac{1}{2}}(\mathbb{{R}}).
\end{equation}
Let $Q(x)>0$ be the  unique radially symmetric ground state (\cite{T11,T26}) of (\ref{classical eq}), and it is shown in Proposition 3.1 of \cite{T26} that the ground state $Q$ has the decay rate
\begin{equation}\label{decay}
	Q(x)=O(|x|^{-2})\ \ \text{as}\ \  |x|\to \infty.
\end{equation}
 Denoting $a^{*}:=\left \| Q \right \|_{2}^{2}$, it is  known from Theorem 1.1 of \cite{T25} that the fractional Gagliardo-Nirenberg inequality corresponding to (\ref{classical eq}) takes the following form
 	\begin{equation}\label{eq1.10}
 	\int_{\mathbb{R}}\left | u \right |^{4}dx\le \frac{2}{a^*} \int_{\mathbb{R}}| \left ( -\Delta   \right )^{\frac{1}{4}}u   |^{2}dx    \int_{\mathbb{R}}\left | u \right |^{2}dx,\ \ \:u\in H^{\frac{1}{2}}(\mathbb{{R}}).
 \end{equation}
 Furthermore, by Lemma 5.4 of \cite{T11} and Remark 2.4 of \cite{T21}, $Q(x)$ satisfies the following Pohozaev identity
\begin{equation}\label{eq1.11}
	\| \left (-\Delta   \right )^{\frac{1}{4} } Q \|_{2}^{2}= \left \| Q \right \|_{2}^{2}=\frac{1}{2}\int_{\mathbb{R}} |Q|^{4}dx  .
\end{equation}
Based  on the above notations, we know from Theorem 1.1 in \cite{T27} that when  $(\mathcal{D})$  holds,  (\ref{problem 1.7}) has a minimizer for any $0<a_1, a_2<a^*$, and  $0<\beta<\sqrt{(a^*-a_1)(a^*-a_2)}$, but (\ref{problem 1.7})  has no  minimizer if $a_1>a^*$, or $a_2>a^*$, or $\beta>\frac{a^*-a_1}{2}+\frac{a^*-a_2}{2}$.
 
 We now present the main results of this paper. Our first main result is concerned with 
 the uniqueness of minimizers for problem (\ref{problem 1.7}).
 \begin{theorem}\label{small}
 	If $V_i(x)$ satisfies $(\mathcal{D})$  for $i=1,2$, then (\ref{problem 1.7}) has a unique
 	non-negative minimizer if $|(a_1, a_2, \beta)|$ is suitably small.
 \end{theorem}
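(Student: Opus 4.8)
The plan is to view the system \eqref{system1.1} as a smooth perturbation of the \emph{decoupled linear} problem obtained by setting $(a_1,a_2,\beta)=(0,0,0)$, and to apply the implicit function theorem at the ground state of that linear problem. The two structural facts I would rely on throughout are that, under $(\mathcal{D})$, the embeddings $\mathcal{H}_i\hookrightarrow L^2(\mathbb{R})$ and $\mathcal{H}_i\hookrightarrow L^4(\mathbb{R})$ are compact, so that the self-adjoint operator $L_i:=\sqrt{-\Delta}+V_i(x)$ associated with the form $\int_{\mathbb R}[u\sqrt{-\Delta}u+V_iu^2]$ has compact resolvent.

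First I would analyze the limiting problem. When $(a_1,a_2,\beta)=(0,0,0)$ the functional splits as $E_{0,0,0}(u_1,u_2)=\|u_1\|_{\mathcal{H}_1}^2+\|u_2\|_{\mathcal{H}_2}^2$, so $\hat e(0,0,0)=\mu_1^0+\mu_2^0$, where $\mu_i^0$ is the lowest eigenvalue of $L_i$, attained by the compact embedding. Because the Poisson semigroup $e^{-t\sqrt{-\Delta}}$ is positivity improving and $V_i\ge 0$, the Trotter product formula and a Perron--Frobenius type argument show that $\mu_i^0$ is simple with a strictly positive $L^2$-normalized eigenfunction $\phi_i$. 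Consequently the minimizers of $\hat e(0,0,0)$ are exactly $(\pm\phi_1,\pm\phi_2)$, and $(\phi_1,\phi_2)$ is the \emph{unique} non-negative one.

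Next I would prove an a priori compactness statement: any family of non-negative minimizers of \eqref{problem 1.7} converges, strongly in $\mathcal{X}$, to $(\phi_1,\phi_2)$ as $(a_1,a_2,\beta)\to(0,0,0)$, with the corresponding Lagrange multipliers tending to $(\mu_1^0,\mu_2^0)$. Coercivity for small parameters follows by estimating $\beta\int_{\mathbb R}|u_1|^2|u_2|^2\le\frac{\beta}{2}(\|u_1\|_4^4+\|u_2\|_4^4)$ with Young's inequality and then invoking the Gagliardo--Nirenberg inequality \eqref{eq1.10}, which gives $E_{a_1,a_2,\beta}(u_1,u_2)\ge\sum_i(1-\tfrac{a_i+\beta}{a^*})\|(-\Delta)^{1/4}u_i\|_2^2+\sum_i\int_{\mathbb R}V_iu_i^2$ on $\mathcal{M}$; together with the direct method and the compact embedding this yields existence of minimizers, uniform $\mathcal{X}$-bounds, and $\hat e(a_1,a_2,\beta)\to\hat e(0,0,0)$. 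Weak lower semicontinuity combined with strong $L^2$- and $L^4$-convergence along a subsequence then identifies the weak limit of such a sequence of minimizers as a non-negative minimizer of $\hat e(0,0,0)$, i.e. as $(\phi_1,\phi_2)$ by the first step; since the limit is independent of the subsequence the whole family converges, and convergence of $\|u_i\|_{\mathcal{H}_i}^2$ upgrades it to strong convergence in $\mathcal{X}$. Finally, testing the Euler--Lagrange equations with $u_i$ gives $\mu_i=\|u_i\|_{\mathcal{H}_i}^2-a_i\|u_i\|_4^4-\beta\int_{\mathbb R}|u_1|^2|u_2|^2\to\|\phi_i\|_{\mathcal{H}_i}^2=\mu_i^0$.

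Finally I would run the implicit function theorem. Using the one-dimensional embeddings $\mathcal{H}_i\hookrightarrow H^{1/2}(\mathbb{R})\hookrightarrow L^p(\mathbb R)$ for all $p<\infty$, the map $\mathcal{F}$ from $\mathcal{H}_1\times\mathcal{H}_2\times\mathbb{R}^2$ (with parameter $(a_1,a_2,\beta)$) into $\mathcal{H}_1^*\times\mathcal{H}_2^*\times\mathbb{R}^2$, whose first two components are the weak form of \eqref{system1.1} and whose last two are $\|u_i\|_2^2-1$, is of class $C^1$ (indeed real-analytic, being polynomial in $(u_1,u_2)$) and vanishes at $(\phi_1,\phi_2,\mu_1^0,\mu_2^0;0,0,0)$. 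Its partial differential with respect to $(u_1,u_2,\mu_1,\mu_2)$ at that point sends $(v_1,v_2,\nu_1,\nu_2)$ to $\big((L_i-\mu_i^0)v_i-\nu_i\phi_i,\ 2\langle\phi_i,v_i\rangle\big)_{i=1,2}$; since $L_i-\mu_i^0:\mathcal{H}_i\to\mathcal{H}_i^*$ is Fredholm of index $0$ with kernel and cokernel both spanned by $\phi_i$ (this is exactly where simplicity of $\mu_i^0$ enters), one solves for $\nu_i$ via the compatibility condition $\nu_i=-\langle f_i,\phi_i\rangle$ and then for $v_i$ uniquely once $\langle\phi_i,v_i\rangle$ is fixed by the last equation, so this differential is a Banach-space isomorphism. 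The implicit function theorem then produces $\delta_0,r_0>0$ and a unique $C^1$ solution branch of $\mathcal{F}=0$ inside the $r_0$-ball about $(\phi_1,\phi_2,\mu_1^0,\mu_2^0)$ for $|(a_1,a_2,\beta)|<\delta_0$. Combining with the compactness step: if for parameters arbitrarily close to the origin there were two distinct non-negative minimizers, both of them, together with their multipliers, would eventually lie in that $r_0$-ball, contradicting local uniqueness there; hence the non-negative minimizer is unique once $|(a_1,a_2,\beta)|$ is small. The main obstacle, as I see it, is the non-degeneracy claim in the last step — invertibility of the bordered linearized operator — which rests on the simplicity of the principal eigenvalue $\mu_i^0$ of $L_i$; the remaining steps are a routine perturbation-and-compactness argument.
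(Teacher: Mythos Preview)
Your proposal is correct and follows essentially the same strategy as the paper: identify the unique non-negative minimizer at $(a_1,a_2,\beta)=(0,0,0)$ as the pair of first eigenfunctions, show that any family of non-negative minimizers together with their Lagrange multipliers converges to this limit, and then invoke the implicit function theorem, whose non-degeneracy hypothesis reduces to the simplicity of the first eigenvalue of $\sqrt{-\Delta}+V_i$. The only cosmetic difference is that the paper parametrizes $u_i=(1+s_i)\Psi_{i1}+\ell_i$ with $\ell_i\perp\Psi_{i1}$ to make the linearization block-diagonal (their Lemma~\ref{cunzai} and Proposition~\ref{pu}), whereas you check invertibility of the bordered operator $(v_i,\nu_i)\mapsto\big((L_i-\mu_i^0)v_i-\nu_i\phi_i,\,2\langle\phi_i,v_i\rangle\big)$ directly; these are equivalent formulations of the same Lyapunov--Schmidt step.
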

 
 \begin{remark}
 	Some results on uniqueness for the single-component equations analogous to (\ref{ori2}) were established by using the contraction mapping approach in \cite{T22, T23} and the implicit function theorem in \cite{T25}. In proving  Theorem \ref{small},  we adopt strategies similar to those in \cite{T25}.
 \end{remark}

 For      any fixed $0 < \beta < a^*$, we investigate the limit behavior of the minimizers for  (\ref{problem 1.7}) as $(a_1, a_2)$ approach the threshold $(a^* - \beta, a^* - \beta)$ in the case where  (\ref{problem 1.7})   has no minimizer at the threshold. According to  Theorem 1.6 (i) of \cite{T27}, we will consider the special case where  $\mathop{\mathrm{inf\mathrm{} } } _{x\in \mathbb{R} }\big(V_{1}(x)+V_{2}(x)\big)=0$, which means that the minima of $V_1(x)$ coincide with those of $V_2(x)$. Moreover, for $i=1,2$, we assume the trapping potential $V_{i}(x)$  has a finite number of isolated minima,   and  in their vicinity, it behaves like a power of the distance from these points, that is, $V_{i}(x)$ takes the form of
 \begin{equation}
 \begin{aligned}\label{form 1}&V_{i}(x)=h_{i}(x)\prod_{j=1}^{n_{i}}|x-x_{ij}|^{p_{ij}}\quad\mathrm{with}\quad C<h_{i}(x)<1/C\quad\mathrm{in}\ \ \mathbb{R},\\&\text{and}\ \ h_{i}(x)\in C_{\mathrm{loc}}^{\alpha}(\mathbb{R})\quad\mathrm{for~some}\quad \alpha\in(0,1),\end{aligned}
 \end{equation}
 where $n_i\in \mathbb{N}$, $0<p_{ij}<3$, $x_{ij}\neq x_{ik}$ for $j\neq k$, and $\lim_{x\to x_{ij}}h_{i}(x)$ exists for all $1\leq j\leq n_{i}$. In addition, we suppose, \textcolor{red}{without loss of generality}, that there exists an integer $l$ satisfying $1 \leq l \leq \min\{n_1, n_2\}$ such that
 \begin{equation}\label{point}
 	\begin{aligned}&x_{1j}=x_{2j},\quad\mathrm{where}\quad j=1,\cdots,l;\\&x_{1j_{1}}\neq x_{2j_{2}},\quad\mathrm{where}\quad j_{i}\in\begin{Bmatrix}l+1,\cdots n_{i}\end{Bmatrix}\ \ \mathrm{and}\ \ i=1,2.\end{aligned}
 \end{equation}
 Notice that, (\ref{point}) indicates
 \begin{equation}\label{Lamda}
 	\Lambda:=\left\{x\in\mathbb{R}:V_1(x)=V_2(x)=0\right\}=\{x_{11}, x_{12},\cdots, x_{1l}\}.
 \end{equation}
 Define
 \begin{equation}\label{index}
 	\bar{p}_{j}:=\min\left\{p_{1j}, p_{2j}\right\},\ j=1, \cdots, l;\quad p_{0}:=\max_{1\leq j\leq l}\min\left\{p_{1j}, p_{2j}\right\}=\max_{1\leq j\leq l}\bar{p}_{j}, 
 \end{equation}
 so that
 \begin{equation}\label{1.17}
 	\bar{\Lambda}:=\begin{Bmatrix}x_{1j}: \bar{p}_j=p_0,\ j=1,\cdots,l\end{Bmatrix}\subset\Lambda.
 \end{equation}
 Additionally for $j=1,\cdots, l$, we define $\gamma_j \in(0,\infty]$ by 
 \begin{equation}\label{gamma}
 	\gamma_j:=\lim_{x\to x_{1j}}\frac{V_1(x)+V_2(x)}{|x-x_{1j}|^{p_0}}.
 \end{equation}
 Note that for $j=1,\cdots, l$,  $\gamma_{j}<\infty$ if and only if $x_{1j}\in\bar{\Lambda}$. Finally, we introduce $\gamma = \min \left\{ \gamma_1, \dots, \gamma_l \right\}$ so that the set
 \begin{equation}\label{flattest}
 	\mathcal{Z} := \left\{ x_{1j} : \gamma_j = \gamma, \ 1 \leq j \leq l \right\} \subset \bar{\Lambda}
 \end{equation}
 represents the locations of the flattest global minima  of $V_1(x) + V_2(x)$. With the above notations, the second significant result of the present paper can be formulated as follows.
 
 \begin{theorem}\label{blow}
 	Let $0< \beta < a^*$, and  suppose that for $i=1,2$,    $p_{ij}\in(0,1)$  for  $1\leq j\leq n_{i}$   and  $V_i( x)$ satisfies (\ref{form 1}) - (\ref{point}). Let $(u_{a_1},u_{a_2})$ be a non-negative minimizer of (\ref{problem 1.7}) as $(a_1,a_2) \nearrow( a^* - \beta , a^* - \beta )$.
 	Then for any  sequence $\{(a_{1k}, a_{2k})\}$ satisfying  $(a_{1k}, a_{2k})\nearrow ( a^* - \beta , a^* - \beta )$ as $k\to\infty$, there exists a subsequence of  $\{(a_{1k}, a_{2k})\}$,  still denoted by $\{(a_{1k}, a_{2k})\}$, such that for
    $i = 1, 2$, each $u_{a_{ik}}$ has at least one  global maximum point $x_{ik}\xrightarrow{k}\bar{x}_0$ for some  $\bar{x}_0\in\mathcal{Z}$ and 
    \begin{equation*}
    	\lim_{k\to\infty}\frac{|x_{ik}-\bar{x}_0|}{(a^*-\frac{a_{1k}+a_{2k}+2\beta}{2})^{\frac{1}{p_0+1}}}=0.
    \end{equation*}
     In addition, for $i=1,2$,
     \begin{equation*}
     	\lim_{k\to\infty}\left(a^*-\frac{a_{1k}+a_{2k}+2\beta}{2}\right)^{\frac{1}{2(p_0+1)}}u_{a_{ik}}\left(\left(a^*-\frac{a_{1k}+a_{2k}+2\beta}{2}\right)^{\frac{1}{p_0+1}}x+x_{ik}\right)=\frac{\lambda^{\frac{1}{2}}}{\|Q\|_2}Q(\lambda x)
     \end{equation*}
     strongly in $H^{\frac{1}{2}}(\mathbb{{R}})$. Here  $\lambda$ is defined as 
     \begin{equation*}
     	\lambda=\left(\frac{p_0\gamma}{2}\int_{\mathbb{R}}|x|^{p_0}Q^2(x)dx\right)^{\frac{1}{p_0+1}},
     \end{equation*}
     where $p_0>0$ and $\gamma>0$ are defined in (\ref{index}) and (\ref{flattest}), respectively.
 \end{theorem}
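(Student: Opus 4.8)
The plan is to follow the by-now-standard blow-up analysis for mass-critical constrained minimization problems, adapted to the two-component fractional setting. Write $\varepsilon_k := a^* - \tfrac{a_{1k}+a_{2k}+2\beta}{2} \searrow 0$ along the chosen sequence. First I would establish the sharp energy asymptotics: using $Q$ rescaled and localized near a point $x_{1j_0}\in\mathcal{Z}$ as a test pair $(u_1,u_2)$ in $E_{a_{1k},a_{2k},\beta}$ (one natural choice being both components proportional to the same bump), together with the Gagliardo--Nirenberg inequality (\ref{eq1.10}), the Pohozaev identity (\ref{eq1.11}), and the local behavior (\ref{form 1}) of $V_i$, one shows
\begin{equation*}
  \hat e(a_{1k},a_{2k},\beta) = \bigl(1+o(1)\bigr)\,\frac{p_0+1}{p_0}\Bigl(\frac{p_0\gamma}{2}\int_{\mathbb{R}}|x|^{p_0}Q^2\,dx\Bigr)^{\frac{1}{p_0+1}}\,\varepsilon_k^{\frac{p_0}{p_0+1}},
\end{equation*}
with matching upper and lower bounds; the lower bound comes from splitting $E_{a_{1k},a_{2k},\beta}$ so that the potential terms are controlled from below by $\int V_i u_{a_{ik}}^2$ and the kinetic/nonlinear part is handled by (\ref{eq1.10}) applied to $u_1+u_2$ or componentwise after a Young-type splitting of the $\beta$ cross term. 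This pins down both the blow-up rate $\varepsilon_k^{1/(p_0+1)}$ and the constant $\lambda$.

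Next I would extract the concentration. From the energy bound one deduces $\int |(-\Delta)^{1/4}u_{a_{ik}}|^2 \to \infty$ at rate $\varepsilon_k^{-p_0/(p_0+1)}$, so define the rescaled functions $w_{ik}(x) := \varepsilon_k^{1/(2(p_0+1))} u_{a_{ik}}\!\bigl(\varepsilon_k^{1/(p_0+1)}x + y_{ik}\bigr)$ for a suitable translation sequence $y_{ik}$ (chosen via a vanishing-compactness-dichotomy / concentration argument on $u_{a_{ik}}$ so that $w_{ik}$ does not vanish). One shows $\{w_{ik}\}$ is bounded in $H^{1/2}(\mathbb{R})$, and any weak limit $w_i$ satisfies, after passing to the limit in the Euler--Lagrange system (\ref{system1.1}) with the rescaled Lagrange multipliers, an equation of the form $\sqrt{-\Delta}w_i + w_i = w_i^3$ up to scaling; by the uniqueness of the ground state $Q$ (\cite{T11,T26}) and the optimality in (\ref{eq1.10}), $w_i$ must be a rescaling/translate of $Q$, and the energy identity forces strong $H^{1/2}$ convergence and fixes the scaling parameter to be exactly $\lambda/\|Q\|_2 \cdot Q(\lambda\,\cdot)$. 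Identifying the maximum points $x_{ik}$ and showing $x_{ik}\to\bar x_0\in\mathcal Z$ with $|x_{ik}-\bar x_0| = o(\varepsilon_k^{1/(p_0+1)})$ then follows from a refined lower-bound computation: if the concentration point drifted to a point with a larger $\gamma_j$ (or off $\Lambda$ entirely), the potential energy $\int V_i u_{a_{ik}}^2$ would be strictly larger than the value compatible with the established energy asymptotics, a contradiction. That the two components concentrate at the \emph{same} point $\bar x_0$ is forced by $\inf_x(V_1+V_2)=0$ together with (\ref{Lamda}): the sum $\int(V_1+V_2)(u_{a_{1k}}^2+u_{a_{2k}}^2)$-type estimate only tolerates concentration on $\Lambda$.

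The main obstacle I anticipate is the two-component coupling in the lower energy bound: unlike the scalar case, one cannot directly apply the Gagliardo--Nirenberg inequality because of the cross term $\beta\int u_1^2 u_2^2$. The right move is to combine $a_i\int u_i^4$ and $2\beta\int u_1^2u_2^2$ using $2u_1^2u_2^2 \le u_1^4 + u_2^4$ together with the constraint $\|u_i\|_2^2=1$, which produces an effective single-component problem with coefficient $\tfrac{a_{1k}+a_{2k}+2\beta}{2}$ approaching $a^*$ — this is exactly why the natural small parameter is $\varepsilon_k$ as defined above, and why equality in the splitting (hence $u_{a_{1k}}$ and $u_{a_{2k}}$ asymptotically equal after rescaling) is forced in the limit. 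Making this splitting \emph{sharp} (controlling the error so that it does not spoil the leading constant $\lambda$) is the delicate point; once that is in place, the remaining steps are parallel to the scalar fractional analysis in \cite{T21,T25} and the Laplacian-system analysis in \cite{T18,T28}, with the decay rate (\ref{decay}) of $Q$ ensuring $\int|x|^{p_0}Q^2\,dx<\infty$ since $p_0<1<3$.
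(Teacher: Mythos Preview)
Your outline tracks the paper's proof closely: the Young splitting you describe is exactly the algebraic identity the paper exploits, rewriting $E_{a_1,a_2,\beta}$ as two decoupled single-component energies with parameters $d_i=a_i+\beta$ plus the nonnegative defect $\tfrac{\beta}{2}\int(|u_1|^2-|u_2|^2)^2$; the remainder (two-sided energy bounds, rescaling, concentration--compactness, and matching the constant $\lambda$ by a refined lower bound) follows the same scheme.

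There is, however, one real gap. You assert that the weak limit $w_i$ must be a rescaling of $Q$ ``by the uniqueness of the ground state $Q$ and the optimality in (\ref{eq1.10})''. For $\sqrt{-\Delta}\,u+u=u^3$ on $\mathbb R$ only the uniqueness of \emph{ground states} is known \cite{T11,T26}; it is not established that every positive $H^{1/2}$ solution is a ground state. Passing to the limit in the Euler--Lagrange system yields only that $w_0>0$ solves the limiting equation, so an additional argument is needed to show $w_0$ has least action. The paper supplies one: since the rescaled minimizers $(w_{1k},w_{2k})$ still minimize a rescaled constrained functional, one tests that functional against $(w^*/\|w^*\|_2,\,w^*/\|w^*\|_2)$ for any hypothetical lower-action solution $w^*$ and derives a contradiction via Fatou's lemma. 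Two further technical ingredients you omit but will need are the strict positivity of $w_0$ (obtained from positivity of the resolvent kernel of $(\sqrt{-\Delta}+\lambda)^{-1}$, since no classical strong maximum principle is directly available here) and the uniform $L^\infty$ decay of $w_{ik}$ at infinity required to locate global maxima, which in the nonlocal setting relies on a fractional De~Giorgi--Nash--Moser estimate applied to the subsolution inequality satisfied by $w_{ik}$.
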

 \begin{remark}
 	 The existence of the global maximum point for each non-negative minimizer of problem (\ref{problem 1.7}) relies on  the non-local De Giorgi-Nash-Moser theory introduced in \cite{T16} and \cite{T17}. However, the uniqueness of the global maximum point for  problem (\ref{problem 1.7}) remains unproven due to the deficiency of  elliptic regularity theory in the fractional Sobolev space, which is essential for proving the smoothness of the minimizer sequence. Consequently, the question of uniqueness remains open for future investigation.
 \end{remark}
 
 \begin{remark}
 	The uniqueness and concentration behavior of minimizers obtained in this paper depend on the existence and nonexistence results given in Theorems 1.1 and 1.6 of \cite{T27}, which left the case $s\neq \tfrac12$ open; consequently, the same case remains open in our work.
 \end{remark}
 
 Compared with the Laplacian system in \cite{T18},  some new difficulties arise due to the presence of the non-local operator $\sqrt{-\Delta }$. Firstly, the strong maximum principle as in  \cite{T18} does not work to establish the positivity of the solution to the fractional Laplacian equation (\ref{3.48}). To resolve this difficulty, we deduce the positivity of the solution from the strict positivity of the kernel $G_{1/2,\lambda}(x-y)$ of the resolvent $(\sqrt{-\Delta}+\lambda)^{-1}$ on $\mathbb{R}$, as established in Lemma C.1 of \cite{T26}.
 Secondly, due to the non-uniqueness of positive solutions to (\ref{classical eq}), we prove that under a suitable translation and a dilation, each component of the non-negative minimizer $(u_{d_1}, u_{d_2})$ of (\ref{new problem}) converges to the ground state of (\ref{3.50}). Then, via a coordinate transformation, we establish the equivalence between  this ground state and that of (\ref{classical eq}), thereby overcoming the obstacle arising from the non-uniqueness of positive solutions  by exploiting the uniqueness of the ground state of (\ref{classical eq}).
  Lastly, in striking contrast to the classical Schrödinger equation with the Laplacian in which the ground state decays exponentially at infinity, the ground state of (\ref{classical eq}) decays only polynomially at infinity. This is why we first require that the order $p_{ij}<3$ for the polynomial potential function (\ref{form 1}), which ensures that $\left|\cdot\right|^{{p}_{0}}Q^2(x)$ is integrable. Moreover, to derive the optimal energy estimates for $e_{i}(d_i)$$(i=1,2)$, a crucial  step that contributes to $L^{4}(\mathbb{{R}})$ - estimates of minimizers, we further impose that $p_{ij}<1$ (see the forthcoming estimate (\ref{limitiation})). Consequently,  we assume that $0 < p_{ij} < 1$ for $i = 1, 2$, $j=1, \cdots, n_i$.
 
    From Theorems \ref{small} and \ref{blow}, we find that symmetry breaking occurs in the minimizers of $\hat{e} (a_{1}, a_{2}, \beta)$  when the potentials $V_1(x)$ and $V_2(x)$ possess symmetry. For example, take the trapping potentials $V_1(x)$ and $V_2(x)$ of the form
 \begin{equation*}
 	V_1(x)=V_2(x)=\prod_{j=1}^l|x-x_j|^p,\ p>0,
 \end{equation*} 
 where the points $x_j$ with $j=1, \cdots, l$ are  arranged on the vertices of a regular polyhedron centred at the origin. It then follows from Theorem \ref{small} that there exist ${a}_*$ and $\bar{a}$ satisfying   $0<{a}_*\leq \bar{a}<a^*$, such that for $0<a_i+\beta<a_*(i=1,2)$, $\hat{e} (a_{1}, a_{2}, \beta)$ admits a unique non-negative minimizer with the same symmetry as $V_1(x)=V_2(x)$. However, symmetry breaking occurs when $\bar{a}<a_i+\beta<a^*(i=1,2)$, since Theorem \ref{blow} implies that  $\hat{e} (a_{1}, a_{2}, \beta)$ admits (at least) $l$ different non-negative minimizers, with both of their components concentrating at a zero point of $V_1(x)=V_2(x)$.

This paper is organized as follows. Section \ref{section existence}  focuses on the proof of Theorem \ref{small} on the uniqueness of non-negative minimizers for $\hat{e} (a_{1}, a_{2}, \beta)$ when  $|(a_1, a_2, \beta)|$ is suitably small. In section \ref{section2}, we prove Proposition \ref{pro 3.1}  on the optimal energy estimates of minimizers, upon which the proof of Theorem \ref{blow} is addressed in Section \ref{section new}. 
 
 \section{Uniqueness of non-negative minimizers} \label{section existence}
 
 In this section, we will prove Theorem \ref{small}   by using the
 implicit function theorem. To this end, we start with the following compactness result.
 
	\begin{lemma}\label{lemma2.1}
	(\cite[Lemma 3.1]{T25}) Suppose  \  $0\le V_{i}(x)\in L^{\infty } _{loc}(\mathbb{R}) $ \ satisfies  $\displaystyle\lim_{\lvert  x\rvert \rightarrow \infty }V_{i}(x)=\infty $,   $i=1,2$. Then for all $2\le q < \infty$, the embedding $\mathcal{H}_{1}\times\mathcal{H}_{2}\hookrightarrow L^{q}(\mathbb{R} )\times L^{q}(\mathbb{R} )$ is compact.
\end{lemma}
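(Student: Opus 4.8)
The plan is to deduce the compactness of the product embedding from the compactness of each factor $\mathcal{H}_i\hookrightarrow L^{q}(\mathbb{R})$, $i=1,2$. Indeed, if $\{(u_n,v_n)\}$ is bounded in $\mathcal{H}_1\times\mathcal{H}_2$, one first extracts a subsequence along which $u_n\to u$ in $L^{q}(\mathbb{R})$ and then a further subsequence along which $v_n\to v$ in $L^{q}(\mathbb{R})$. So I would fix $i$, take $\{u_n\}$ bounded in $\mathcal{H}_i$ with $\|u_n\|_{\mathcal{H}_i}\le M$, and show it has a subsequence converging strongly in $L^{q}(\mathbb{R})$.

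The key — and only genuinely delicate — step is to upgrade the $\mathcal{H}_i$ bound to a bound in $H^{\frac12}(\mathbb{R})$, that is, to prove $\|u\|_{L^2(\mathbb{R})}\le C\|u\|_{\mathcal{H}_i}$ for $u\in\mathcal{H}_i$. This cannot be read off a Sobolev inequality, since in dimension one $s=\frac12$ is the borderline exponent and $\dot H^{\frac12}(\mathbb{R})$ embeds into no $L^{q}(\mathbb{R})$ with $q<\infty$; instead I would argue by contradiction. If $\|u_n\|_{L^2}\to\infty$ along a subsequence, put $v_n=u_n/\|u_n\|_{L^2}$, so that $\|v_n\|_{L^2}=1$, $\|(-\Delta)^{\frac14}v_n\|_{L^2}^2\le M^2/\|u_n\|_{L^2}^2\to0$ and $\int_{\mathbb{R}}V_i(x)v_n^2\,dx\le M^2/\|u_n\|_{L^2}^2\to0$. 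Picking $R$ with $V_i\ge1$ on $\{|x|>R\}$ gives $\int_{|x|>R}v_n^2\,dx\le\int_{\mathbb{R}}V_i v_n^2\,dx\to0$, hence $\int_{|x|\le R}v_n^2\,dx\to1$. Since $\{v_n\}$ is then bounded in $H^{\frac12}(\mathbb{R})$, after passing to a subsequence $v_n\rightharpoonup v$ weakly in $H^{\frac12}(\mathbb{R})$ and $v_n\to v$ in $L^2_{\mathrm{loc}}(\mathbb{R})$ by Rellich's theorem, so $\int_{|x|\le R}v^2\,dx=1$ and $v\not\equiv0$; on the other hand weak lower semicontinuity gives $\|(-\Delta)^{\frac14}v\|_{L^2}\le\liminf_n\|(-\Delta)^{\frac14}v_n\|_{L^2}=0$, whence $|\xi|^{\frac12}\widehat v(\xi)=0$ a.e.\ and $v\equiv0$, a contradiction. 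Thus $\{u_n\}$ is bounded in $H^{\frac12}(\mathbb{R})$, and up to a subsequence $u_n\rightharpoonup u$ weakly there.

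The remaining steps are standard. Using the compactness of the embedding $H^{\frac12}(-R,R)\hookrightarrow L^{q}(-R,R)$ (valid for all $2\le q<\infty$, as $s=\frac12$ is critical in one dimension) together with the continuous embedding $H^{\frac12}(\mathbb{R})\hookrightarrow L^{q}(\mathbb{R})$, one gets $u_n\to u$ in $L^{q}(-R,R)$ for every fixed $R$. For tightness at infinity I would combine the potential estimate
\[
\int_{|x|>R}u_n^2\,dx\le\frac{1}{m_R}\int_{\mathbb{R}}V_i(x)u_n^2\,dx\le\frac{M^2}{m_R},\qquad m_R:=\inf_{|x|>R}V_i(x)\xrightarrow{R\to\infty}\infty,
\]
with the interpolation bound $\|u_n\|_{L^{q}(|x|>R)}\le\|u_n\|_{L^2(|x|>R)}^{1-\theta}\,\|u_n\|_{L^{q'}(\mathbb{R})}^{\theta}$ for a fixed $q'\in(q,\infty)$ and $\|u_n\|_{L^{q'}(\mathbb{R})}\le C\|u_n\|_{H^{\frac12}(\mathbb{R})}$, to conclude that $\sup_n\|u_n\|_{L^{q}(|x|>R)}\to0$ as $R\to\infty$; the same bound holds for the limit $u\in\mathcal{H}_i$. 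Splitting $\|u_n-u\|_{L^{q}(\mathbb{R})}$ over $\{|x|\le R\}$ and $\{|x|>R\}$, letting $n\to\infty$ and then $R\to\infty$, gives $u_n\to u$ strongly in $L^{q}(\mathbb{R})$. Hence $\mathcal{H}_i\hookrightarrow L^{q}(\mathbb{R})$ is compact for $i=1,2$, and the product embedding is compact.

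I expect the main obstacle to be the second step above — obtaining the $L^2(\mathbb{R})$, equivalently $H^{\frac12}(\mathbb{R})$, bound on an $\mathcal{H}_i$-bounded sequence — precisely because $\dot H^{\frac12}(\mathbb{R})$ is the critical homogeneous Sobolev space in one dimension, so the usual Gagliardo--Nirenberg route is unavailable and one must instead exploit that a diverging potential together with a vanishing seminorm forces the weak limit of the rescaled functions to be zero. Everything else is the classical ``local compactness plus tightness from $V_i\to\infty$'' scheme; note that only $V_i\ge0$ and $V_i(x)\to\infty$ are used, while $V_i\in L^{\infty}_{\mathrm{loc}}(\mathbb{R})$ merely guarantees that $\mathcal{H}_i$ is a well-defined space containing $C_c^{\infty}(\mathbb{R})$.
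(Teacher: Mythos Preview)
The paper gives no proof of this lemma; it is simply quoted from \cite[Lemma~3.1]{T25}. Your argument is correct and self-contained: the reduction to a single factor is immediate, the contradiction argument yielding the $L^{2}$ (hence $H^{1/2}$) bound from the $\mathcal{H}_i$-norm is valid and is indeed needed since the norm in \eqref{eq1.8} contains no explicit $L^{2}$ term, and the remaining ``local Rellich compactness plus tail tightness from $V_i\to\infty$'' scheme goes through exactly as you describe.
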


Define
\begin{equation}\label{lambda}
	\lambda_{i1}=\inf\left\{\int_{\mathbb{R}}\left( | (-\Delta  ) ^{\frac{1}{4} }u|^{2}+V_i(x)u^2\right)dx:u\in\mathcal{H}_i\ \ \mathrm{and}\ \ \ \int_{\mathbb{R}}|u|^2dx=1\right\}, \quad i=1,2.
\end{equation}
Usually, for $i=1,2$, $\lambda_{i1}$ denotes the first eigenvalue of $\sqrt{-\Delta}+V_i(x)$ in $\mathcal{H}_i$. Moreover, it follows from  Lemma \ref{lemma2.1} that $\lambda_{i1}$ is simple and can
be attained by a positive normalized function $\Psi_{i1}\in\mathcal{H}_i$, which is called the first eigenfunction of $\sqrt{-\Delta}+V_i(x)$, $i=1,2$. Let
\begin{equation*}
	\mathcal{L}_i=\operatorname{span}\{\Psi_{i1}\}^{\perp}=\left\{u\in\mathcal{H}_{i}:\int_{\mathbb{R}}u\Psi_{i1}dx=0\right\},\quad i=1,2,
\end{equation*}
so that
 \begin{equation*}
 	\mathcal{H}_i=\mathrm{span}\{\Psi_{i1}\}\oplus \mathcal{L}_i,\quad i=1,2.
 \end{equation*}
 Then, we have the following proposition.
 \begin{proposition}(\cite[Lemma A.1]{T25})\label{pu}
 	Suppose $V_i(x)$ satisfies $(\mathcal{D})$ for i = 1, 2, then we have
\begin{flalign*}
	&(\mathrm{i})\ \ \mathrm{ker}\bigl(\sqrt{-\Delta}+V_{i}(x)-\lambda_{i1}\bigl) = \mathrm{span}\{\Psi_{i1}\}, &\\
	&(\mathrm{ii})\ \Psi_{i1} \notin \bigl(\sqrt{-\Delta}+V_{i}(x)-\lambda_{i1}\bigl)\mathcal{L}_i, & \\
	&(\mathrm{iii})\ \mathrm{Im}\bigl(\sqrt{-\Delta}+V_{i}(x)-\lambda_{i1}\bigl) = \bigl(\sqrt{-\Delta}+V_{i}(x)-\lambda_{i1}\bigl) \mathcal{L}_i\ \text{is closed in}\ \mathcal{H}_i^*, & \\
	&(\mathrm{iv})\ \mathrm{codim}\ \mathrm{Im}\bigl(\sqrt{-\Delta}+V_{i}(x)-\lambda_{i1}\bigl) = 1, &
\end{flalign*}
\noindent where $\mathcal{H}_i^*$ denotes the dual space of $\mathcal{H}_i$, $i=1,2$.
 \end{proposition}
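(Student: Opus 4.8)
The plan is to recast the statement in terms of the compact self-adjoint operator built from the compact embedding of Lemma \ref{lemma2.1}, and then to invoke the Fredholm alternative. Fix $i\in\{1,2\}$ and let $A_i:=\sqrt{-\Delta}+V_i(x)$ act as the bounded operator $\mathcal{H}_i\to\mathcal{H}_i^*$ associated with the symmetric bilinear form $a_i(u,v):=\int_{\mathbb{R}}\bigl[(-\Delta)^{1/4}u\,(-\Delta)^{1/4}v+V_i(x)uv\bigr]dx$, so that $\langle A_iu,v\rangle=a_i(u,v)$ and $a_i(u,u)=\|u\|_{\mathcal{H}_i}^2$. Define $B_i:\mathcal{H}_i\to\mathcal{H}_i$ by the Riesz representation $\langle B_iu,v\rangle_{\mathcal{H}_i}=\int_{\mathbb{R}}uv\,dx$; since the embedding $\mathcal{H}_i\hookrightarrow L^2(\mathbb{R})$ is compact by Lemma \ref{lemma2.1}, $B_i$ is self-adjoint, positive and compact, and $B_i=(\sqrt{-\Delta}+V_i(x))^{-1}$ in the weak sense. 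Writing $R_i:\mathcal{H}_i^*\to\mathcal{H}_i$ for the Riesz isomorphism, a direct computation gives, for every $\lambda>0$,
\begin{equation*}
	R_i\circ(A_i-\lambda)=I-\lambda B_i\quad\text{on }\mathcal{H}_i,\qquad\text{and}\qquad A_iu=\lambda u\iff B_iu=\lambda^{-1}u .
\end{equation*}
Since $V_i\ge0$ and $\|(-\Delta)^{1/4}u\|_2>0$ for $u\neq0$, we have $\lambda_{i1}>0$, so these identities will be used with $\lambda=\lambda_{i1}$.

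First I would establish (i). The quotient $\|u\|_{\mathcal{H}_i}^2/\|u\|_2^2$ attains its infimum $\lambda_{i1}$ on $\mathcal{H}_i\setminus\{0\}$ by the compactness in Lemma \ref{lemma2.1}. For any minimizer $u$, the inequality $\|(-\Delta)^{1/4}|u|\|_2\le\|(-\Delta)^{1/4}u\|_2$, which is strict unless $u$ keeps a fixed sign a.e., together with $\int_{\mathbb{R}}V_i(x)|u|^2dx=\int_{\mathbb{R}}V_i(x)u^2dx$, shows that $|u|$ is again a minimizer and hence that $u$ itself keeps a fixed sign; moreover $|u|$ solves $\sqrt{-\Delta}|u|+V_i(x)|u|=\lambda_{i1}|u|$. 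Invoking the strict positivity of the resolvent kernel $G_{1/2,\lambda}(x-y)$ of $(\sqrt{-\Delta}+\lambda)^{-1}$ on $\mathbb{R}$ (Lemma C.1 of \cite{T26}) in the usual non-local maximum-principle fashion then forces $|u|>0$ a.e. in $\mathbb{R}$. Consequently every nonzero element of $\ker(A_i-\lambda_{i1})$ is an eigenfunction of fixed sign that vanishes nowhere, so no two of them can be $L^2$-orthogonal; this yields $\dim\ker(A_i-\lambda_{i1})=1$, whose positive $L^2$-normalised generator is $\Psi_{i1}$, proving (i). (Equivalently, $B_i$ is positivity-improving, and the Krein--Rutman theorem gives at once the simplicity of its largest eigenvalue $\lambda_{i1}^{-1}$.)

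For (ii), suppose toward a contradiction that $\Psi_{i1}=(A_i-\lambda_{i1})w$ in $\mathcal{H}_i^*$ for some $w\in\mathcal{L}_i$, where $\Psi_{i1}\in\mathcal{H}_i$ is identified with the functional $v\mapsto\int_{\mathbb{R}}\Psi_{i1}v\,dx$. The bilinear form $b_i(u,v):=a_i(u,v)-\lambda_{i1}\int_{\mathbb{R}}uv\,dx$ is symmetric and satisfies $\langle(A_i-\lambda_{i1})u,v\rangle=b_i(u,v)$, so testing the assumed identity against $\Psi_{i1}$ gives
\begin{equation*}
	1=\|\Psi_{i1}\|_2^2=\langle(A_i-\lambda_{i1})w,\Psi_{i1}\rangle=b_i(w,\Psi_{i1})=b_i(\Psi_{i1},w)=\langle(A_i-\lambda_{i1})\Psi_{i1},w\rangle=0,
\end{equation*}
which is absurd; this proves (ii).

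Finally, (iii) and (iv) follow by soft arguments. Since $(A_i-\lambda_{i1})\Psi_{i1}=0$ and $\mathcal{H}_i=\mathrm{span}\{\Psi_{i1}\}\oplus\mathcal{L}_i$, we have $\mathrm{Im}(A_i-\lambda_{i1})=(A_i-\lambda_{i1})\mathcal{L}_i$. From $R_i\circ(A_i-\lambda_{i1})=I-\lambda_{i1}B_i$ and the compactness of $B_i$, the operator $I-\lambda_{i1}B_i$ is Fredholm of index $0$ and hence has closed range; applying the homeomorphism $R_i^{-1}$, $\mathrm{Im}(A_i-\lambda_{i1})$ is closed in $\mathcal{H}_i^*$, which is (iii). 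For (iv), since $R_i$ is an isomorphism and by the Fredholm alternative together with (i),
\begin{equation*}
	\mathrm{codim}\,\mathrm{Im}(A_i-\lambda_{i1})=\mathrm{codim}\,\mathrm{Im}(I-\lambda_{i1}B_i)=\dim\ker(I-\lambda_{i1}B_i)=\dim\ker(A_i-\lambda_{i1})=1 ;
\end{equation*}
alternatively one may use $\mathrm{Im}(A_i-\lambda_{i1})^{\perp}=\ker(A_i-\lambda_{i1})$, which holds by the symmetry of $b_i$, together with the closedness just obtained. The hard part will be (i): the non-local operator has no classical strong maximum principle, so the strict positivity of the first eigenfunction must be wrung out of the positivity of the resolvent (equivalently, heat) kernel of $\sqrt{-\Delta}+V_i(x)$; once simplicity is secured, (ii)--(iv) are routine consequences of the Riesz representation and the Fredholm alternative.
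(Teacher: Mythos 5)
The paper offers no proof of this proposition: it is imported verbatim as Lemma A.1 of \cite{T25}, so there is nothing in the text to compare against line by line. Your argument is a correct, essentially self-contained substitute along the standard route — recast $\sqrt{-\Delta}+V_i-\lambda_{i1}$ as $R_i^{-1}(I-\lambda_{i1}B_i)$ with $B_i$ compact, positive and self-adjoint thanks to Lemma \ref{lemma2.1}, prove simplicity of the first eigenvalue variationally, and then obtain (ii)--(iv) from the symmetry of the form and the Fredholm alternative. Parts (ii), (iii) and (iv) are airtight as written (and your proof of (ii) in fact shows the stronger statement $\Psi_{i1}\notin\mathrm{Im}(\sqrt{-\Delta}+V_i-\lambda_{i1})$). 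The one step that needs a little more care is the strict positivity of a sign-definite first eigenfunction in (i): because $V_i$ is unbounded, you cannot simply write $|u|=(\sqrt{-\Delta}+\lambda)^{-1}\bigl[(\lambda_{i1}+\lambda-V_i(x))|u|\bigr]$ and invoke the positivity of $G_{1/2,\lambda}$, since the bracket changes sign where $V_i$ is large. Two standard repairs are available, both of which you already gesture at: either use the regularity theory the paper cites elsewhere to evaluate the equation pointwise at a putative zero $x_0$ of $|u|\ge 0$, where the singular-integral representation gives $\sqrt{-\Delta}\,|u|(x_0)=-c\int_{\mathbb{R}}|u|(y)\,|x_0-y|^{-2}\,dy<0$ while the equation forces the value $0$; or show that $e^{-t(\sqrt{-\Delta}+V_i)}$ is positivity improving (Trotter plus the explicit Poisson kernel of $e^{-t\sqrt{-\Delta}}$) and apply Krein--Rutman, which delivers positivity and simplicity in one stroke. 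With either patch made explicit, the proof is complete.
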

 
 Consider  the  $C^1$ functional  $G_i:\mathcal{X}\times\mathbb{R}^3\mapsto\mathcal{H}_i^*$ defined by
 \begin{equation}\label{Gi}
 	G_i(u_1,u_2,\mu_i,a_i,\beta)=\bigl(\sqrt{-\Delta}+V_i(x)-\mu_i\bigr)u_i-a_iu_i^3-\beta u_j^2u_i,\quad i=1,2,
 \end{equation}
 where $j \neq i$ and $j = 1, 2$. Inspired by (\cite[Lemma A.2]{T25}) and (\cite[Lemma 4.1]{T18}), we then present the following lemma.
 
 \begin{lemma}\label{cunzai}
 	Let $G_i$ $(i=1,2)$ be defined by (\ref{Gi}). Then, for $i=1,2$, there exist $\delta>0$ and a unique function $(u_{i}(a_{1},a_{2},\beta),\mu_{i}(a_{1},a_{2},\beta))\in C^{1}\big(B_{\delta}(\vec{0});B_{\delta}(\Psi_{i1},\lambda_{i1})\big)$ such that
 \end{lemma}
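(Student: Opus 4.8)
The plan is to apply the implicit function theorem to each map $G_i$ separately at the base point $(\Psi_{11},\Psi_{21},\lambda_{i1},0,0,0)$. First I would check that this base point is indeed a zero of $G_i$: when $a_i=\beta=0$ the equation $G_i=0$ reduces to $(\sqrt{-\Delta}+V_i(x)-\mu_i)u_i=0$, and by definition $(\Psi_{i1},\lambda_{i1})$ solves it (while the $j$-th slot is harmless since it enters only through the $\beta u_j^2 u_i$ term, which vanishes). The functional $G_i$ is $C^1$ on $\mathcal{X}\times\mathbb{R}^3$ into $\mathcal{H}_i^*$: the linear part is bounded, and the cubic terms $u_i^3$ and $u_j^2 u_i$ are smooth by the compact (hence bounded) Sobolev embeddings $\mathcal{H}_i\hookrightarrow L^q$ of Lemma \ref{lemma2.1}, so the pointwise products lie in $L^{4/3}\subset\mathcal{H}_i^*$ and depend smoothly on the arguments.

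Next I would identify the relevant partial differential. Since we want to solve for the pair $(u_i,\mu_i)$ as a function of the parameters $(a_1,a_2,\beta)$, the operator to invert is the partial Fr\'echet derivative of $G_i$ with respect to $(u_i,\mu_i)$, evaluated at the base point. At $a_i=\beta=0$ this derivative is the map $\mathcal{H}_i\times\mathbb{R}\to\mathcal{H}_i^*$ sending $(\varphi,t)\mapsto(\sqrt{-\Delta}+V_i(x)-\lambda_{i1})\varphi-t\,\Psi_{i1}$ (the cubic terms drop out after differentiation because they and their derivatives vanish when $a_i=\beta=0$; also $G_i$ does not depend on $u_j$ once $\beta=0$, which is why a one-component IFT suffices even in the coupled system). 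I then need this operator to be an isomorphism. Injectivity: if $(\sqrt{-\Delta}+V_i-\lambda_{i1})\varphi=t\Psi_{i1}$, pairing with $\Psi_{i1}$ and using self-adjointness gives $t\|\Psi_{i1}\|_2^2=0$, hence $t=0$ and then $\varphi\in\ker(\sqrt{-\Delta}+V_i-\lambda_{i1})=\mathrm{span}\{\Psi_{i1}\}$ by Proposition \ref{pu}(i); but Proposition \ref{pu}(ii) says $\Psi_{i1}\notin(\sqrt{-\Delta}+V_i-\lambda_{i1})\mathcal{L}_i$, and a short decomposition argument using $\mathcal{H}_i=\mathrm{span}\{\Psi_{i1}\}\oplus\mathcal{L}_i$ forces $\varphi=0$. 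Surjectivity: the image of $(\varphi,t)\mapsto(\sqrt{-\Delta}+V_i-\lambda_{i1})\varphi - t\Psi_{i1}$ is $\mathrm{Im}(\sqrt{-\Delta}+V_i-\lambda_{i1})+\mathrm{span}\{\Psi_{i1}\}$, which by Proposition \ref{pu}(iii)-(iv) is a closed subspace of codimension one together with a complementary direction $\Psi_{i1}$ (the latter is genuinely complementary again by (ii)), so it is all of $\mathcal{H}_i^*$.

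Having verified that the partial derivative is a Banach-space isomorphism, the implicit function theorem yields $\delta>0$ and a unique $C^1$ map $(a_1,a_2,\beta)\mapsto\big(u_i(a_1,a_2,\beta),\mu_i(a_1,a_2,\beta)\big)$ from $B_\delta(\vec 0)$ into $B_\delta(\Psi_{i1},\lambda_{i1})$ solving $G_i\equiv 0$ there, for each $i=1,2$; shrinking $\delta$ so that a single value works for both components completes the statement. The only genuine subtlety—really the crux—is the surjectivity/injectivity check for the linearized operator, i.e. correctly reading off from Proposition \ref{pu} that appending the extra scalar unknown $\mu_i$ (contributing the direction $-\Psi_{i1}$) exactly repairs the codimension-one defect of $\sqrt{-\Delta}+V_i-\lambda_{i1}$ without introducing a kernel; everything else, including the $C^1$ regularity of $G_i$ and the elimination of the $u_j$-dependence at the base point, is routine. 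One should also remark that the solution so obtained is non-negative: since $u_i$ is close to the positive eigenfunction $\Psi_{i1}$ in $\mathcal{H}_i$ for $\delta$ small, and the equation $(\sqrt{-\Delta}+\lambda)u_i=(\lambda+\mu_i)u_i+a_iu_i^3+\beta u_j^2u_i$ can be inverted against the strictly positive kernel $G_{1/2,\lambda}$ of $(\sqrt{-\Delta}+\lambda)^{-1}$ (cf. Lemma C.1 of \cite{T26}), a bootstrap/iteration argument shows $u_i>0$, which ties this lemma to the non-negative minimizer in Theorem \ref{small}.
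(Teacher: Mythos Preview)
Your linearized operator is not an isomorphism, and this is a genuine gap. The map $(\varphi,t)\mapsto(\sqrt{-\Delta}+V_i-\lambda_{i1})\varphi-t\Psi_{i1}$ from $\mathcal{H}_i\times\mathbb{R}$ to $\mathcal{H}_i^*$ has $(\Psi_{i1},0)$ in its kernel, since $\Psi_{i1}$ is a first eigenfunction. Your injectivity argument correctly reaches $t=0$ and $\varphi\in\mathrm{span}\{\Psi_{i1}\}$, but there is no ``short decomposition argument'' that then forces $\varphi=0$: any multiple of $\Psi_{i1}$ survives. Proposition~\ref{pu}(ii) is a statement about the \emph{range}, not the kernel, and does not help here. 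At the nonlinear level this is equally visible: when $a_i=\beta=0$, every pair $(c\Psi_{i1},\lambda_{i1})$ solves $G_i=0$, so local uniqueness fails without a further condition.

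That missing condition is exactly the mass constraint $\|u_i\|_2^2=1$, which is part of the conclusion (\ref{2.1}) but never enters your argument. The paper repairs the defect by the Lyapunov--Schmidt splitting $u_i=(1+s_i)\Psi_{i1}+\ell_i$ with $\ell_i\in\mathcal{L}_i=\{\Psi_{i1}\}^\perp$, and then applies the implicit function theorem to the combined map $P$ that packages \emph{both} equations $G_1=G_2=0$ \emph{and} both constraints $\|u_i\|_2^2-1=0$, solving for $((\ell_1,\mu_1),(\ell_2,\mu_2),s_1,s_2)$ in terms of $(a_1,a_2,\beta)$. Restricted to $\mathcal{L}_i\times\mathbb{R}$, the derivative $(\hat\ell_i,\hat\mu_i)\mapsto(\sqrt{-\Delta}+V_i-\lambda_{i1})\hat\ell_i-\hat\mu_i\Psi_{i1}$ \emph{is} an isomorphism onto $\mathcal{H}_i^*$ by Proposition~\ref{pu}, and the extra scalar unknown $s_i$ is matched by the extra constraint equation, yielding a block-diagonal invertible linearization. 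This joint treatment also handles the coupling through $u_j$ cleanly; in your one-component setup it is unclear whether $u_j$ is a parameter (in which case $u_i$ would depend on it, not only on $(a_1,a_2,\beta)$) or is somehow fixed. Finally, the positivity remark at the end is extraneous to this lemma; the paper addresses non-negativity only later, in the proof of Theorem~\ref{small}.
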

 \begin{equation}\label{2.1}
 	 \begin{cases}
 		\mu_i(\vec{0})=\lambda_{i1},\ u_i(\vec{0})=\Psi_{i1},\ i=1,2;\\G_i\left(u_1(a_1,a_2,\beta),u_2(a_1,a_2,\beta),\mu_i(a_1,a_2,\beta),a_i,\beta\right)=0,\ i=1,2;\\\|u_1(a_1,a_2,\beta)\|_2^2=\|u_2(a_1,a_2,\beta)\|_2^2=1.
 	\end{cases}
 \end{equation}

   \begin{proof}
   	Define $t_i:(\mathcal{L}_1\times\mathbb{R})\times(\mathcal{L}_2\times\mathbb{R})\times\mathbb{R}^4\mapsto\mathcal{H}_i^*$  by 
   	\begin{equation*}
   		t_i\left((\ell_1,\mu_1),(\ell_2,\mu_2),s_1,s_2,a_i,\beta\right):=G_i((1+s_1)\Psi_{11}+\ell_1, (1+s_2)\Psi_{21}+\ell_2, \mu_i, a_i, \beta),\quad i=1, 2.
   	\end{equation*}
   	Then $t_i\in C^1((\mathcal{L}_1\times\mathbb{R})\times(\mathcal{L}_2\times\mathbb{R})\times\mathbb{R}^4;\mathcal{H}_i^*)$ and
\begin{align*}t_{i}((0,\lambda_{11}),(0,\lambda_{21}),\vec{0})&=G_i(\Psi_{11},\Psi_{21},\lambda_{i1},\vec{0})=0, \notag \\D_{s_i}t_i((0,\lambda_{11}),(0,\lambda_{21}),\vec{0})&=D_{u_i}G_i(\Psi_{11},\Psi_{21},\lambda_{i1},\vec{0})\Psi_{i1}\notag \\&=\bigl(\sqrt{-\Delta}+V_i(x)-\lambda_{i1}\bigr)\Psi_{i1}=0,\\D_{s_j}t_i((0,\lambda_{11}),(0,\lambda_{21}),\vec{0})&=D_{u_j}G_i(\Psi_{11},\Psi_{21},\lambda_{i1},\vec{0})\Psi_{j1}=0,\notag\end{align*}
where $j \neq i$, $i, j=1, 2$. Furthermore, we obtain that, for any $(\hat{\ell}_i,\hat{\mu}_i)\in \mathcal{L}_i\times\mathbb{R}$,
\begin{align}\label{dengshi}
	&\left\langle D_{(\ell_{i},\mu_{i})}t_{i}((0,\lambda_{11}),(0,\lambda_{21}),\vec{0}),(\hat{\ell}_{i},\hat{\mu}_{i})\right\rangle \notag \\=&D_{u_{i}}G_{i}(\Psi_{11},\Psi_{21},\lambda_{i1},\vec{0})\hat{\ell}_{i}+D_{\mu_{i}}G_{i}(\Psi_{11},\Psi_{21},\lambda_{i1},\vec{0})\hat{\mu}_{i}\\=&\bigl(\sqrt{-\Delta}+V_i(x)-\lambda_{i1}\bigr)\hat{\ell}_{i}-\hat{\mu}_{i}\Psi_{i1}\in\mathcal{H}_{i}^{*}, \notag\end{align}
and
\begin{align*}&\left\langle D_{(\ell_j,\mu_j)}t_i((0,\lambda_{11}),(0,\lambda_{21}),\vec{0}),(\hat{\ell}_j,\hat{\mu}_j)\right\rangle \notag \\=&D_{u_j}G_i(\Psi_{11},\Psi_{21},\lambda_{i1},\vec{0})\hat{\ell}_j+D_{\mu_j}G_i(\Psi_{11},\Psi_{21},\lambda_{i1},\vec{0})\hat{\mu}_j=0.\end{align*}
We then derive from  Proposition $\ref{pu}$ and ($\ref{dengshi}$) that, for $i=1, 2$,
\begin{equation}\label{isomorphism}
	D_{(\ell_i,\mu_i)}t_i((0,\lambda_{11}),(0,\lambda_{21}),\vec{0}):\mathcal{L}_i\times\mathbb{R}\mapsto\mathcal{H}_i^*\ \text{is an isomorphism. }
\end{equation}

Now, define $P:(\mathcal{L}_{1}\times\mathbb{R})\times(\mathcal{L}_{2}\times\mathbb{R})\times\mathbb{R}^{5}\mapsto\mathcal{H}_{1}^{*}\times\mathcal{H}_{2}^{*}\times\mathbb{R}^{2}$ by
\begin{equation*}
	P\left((\ell_1,\mu_1),(\ell_2,\mu_2),s_1,s_2,(a_1,a_2,\beta)\right):=\begin{pmatrix}t_1\left((\ell_1,\mu_1),(\ell_2,\mu_2),s_1,s_2,a_1,\beta\right)\\t_2\left((\ell_1,\mu_1),(\ell_2,\mu_2),s_1,s_2,a_2,\beta\right)\\\|(1+s_1)\Psi_{11}+\ell_1\|_2^2-1\\\|(1+s_2)\Psi_{21}+\ell_2\|_2^2-1\end{pmatrix},
\end{equation*}
and set $h_i(\ell_i,s_i)=\|(1+s_i)\Psi_{i1}+\ell_i\|_2^2-1$ for $i=1,2$. Then, we  obtain that
\begin{align*}&D_{((\ell_1,\mu_1),(\ell_2,\mu_2),s_1,s_2)}P((0,\lambda_{11}),(0,\lambda_{21}),0,0,\vec{0})\notag\\&\left.=\left(\begin{array}{cccc}D_{(\ell_1,\mu_1)}t_1&D_{(\ell_2,\mu_2)}t_1&D_{s_1}t_1&D_{s_2}t_1\\D_{(\ell_1,\mu_1)}t_2&D_{(\ell_2,\mu_2)}t_2&D_{s_1}t_2&D_{s_2}t_2\\D_{(\ell_1,\mu_1)}h_1&D_{(\ell_2,\mu_2)}h_1&D_{s_1}h_1&D_{s_2}h_1\\D_{(\ell_1,\mu_1)}h_2&D_{(\ell_2,\mu_2)}h_2&D_{s_1}h_2&D_{s_2}h_2\end{array}\right.\right)\\ \notag&=\begin{pmatrix}(\sqrt{-\Delta}+V_1(x)-\lambda_{11},-\Psi_{11})&0&0&0\\0&(\sqrt{-\Delta}+V_2(x)-\lambda_{21},-\Psi_{21})&0&0\\0&0&2&0\\0&0&0&2\end{pmatrix}.\end{align*}
It then follows from (\ref{isomorphism}) that
\begin{equation*}
		D_{((\ell_1,\mu_1),(\ell_2,\mu_2),s_1,s_2)}P((0,\lambda_{11}),(0,\lambda_{21}),0,0,\vec{0}): (\mathcal{L}_1\times\mathbb{R})\times(\mathcal{L}_2\times\mathbb{R})\times\mathbb{R}^5\mapsto\mathcal{H}_1^*\times\mathcal{H}_2^*\times\mathbb{R}^2
\end{equation*}
is an isomorphism. Hence, by applying the implicit function theorem, we deduce that there exist $\delta>0$ and a unique function
$(\ell_i(a_1,a_2,\beta),\mu_i(a_1,a_2,\beta),s_i(a_1,a_2,\beta))\in C^{1}(B_{\delta}(\vec{0});B_{\delta}(0,\lambda_{i1},0)),$ where $i=1, 2$, such that
\begin{equation}\label{2.10}
	\begin{cases}P\left((\ell_1,\mu_1),(\ell_2,\mu_2),s_1,s_2,(a_1,a_2,\beta)\right)=P((0,\lambda_{11}),(0,\lambda_{21}),0,0,\vec{0})=\vec{0},\\\ell_i(\vec{0})=0,\ \mu_i(\vec{0})=\lambda_{i1},\ s_i(\vec{0})=0,\ i=1,2.&\end{cases}
\end{equation}
 Set 
 \begin{equation*}
 	u_i(a_1,a_2,\beta)=(1+s_i(a_1,a_2,\beta))\Psi_{i1}+\ell_i(a_1, a_2,\beta),\ (a_1,a_2,\beta)\in B_\delta(\vec{0}),\quad i=1,2,
 \end{equation*}
 which  together with (\ref{2.10}) implies that there exists a unique function
\begin{equation*}
	( 	u_i(a_1, a_2, \beta),\mu_i(a_1,a_2,\beta))\in C^1(B_\delta(\vec{0});B_\delta(\Psi_{i1},\lambda_{i1})), \quad i=1, 2,
\end{equation*}
 such that
\begin{equation*}
	u_i(\vec{0})=(1+s_i(\vec{0}))\Psi_{i1}+\ell_i(\vec{0})=\Psi_{i1},\ \mu_i(\vec{0})=\lambda_{i1},
\end{equation*}
and
\begin{equation*}
	\begin{pmatrix}G_1\left(u_1(a_1,a_2,\beta),u_2(a_1,a_2,\beta),\mu_1(a_1,a_2,\beta),a_1,\beta\right)\\G_2\left(u_1(a_1,a_2,\beta),u_2(a_1,a_2,\beta),\mu_2(a_1,a_2,\beta),a_2,\beta\right)\\\|u_1(a_1,a_2,\beta)\|_2^2-1\\\|u_2(a_1,a_2,\beta)\|_2^2-1\end{pmatrix}=\vec{0}.
\end{equation*}
Hence, $(\ref{2.1})$ holds and the proof of Lemma \ref{cunzai} is completed.
    \end{proof}
Based on  Lemma \ref{cunzai}, we next show  the uniqueness of non-negative minimizers of (\ref{problem 1.7}) for suitably  small $|(a_1,a_2,\beta)|$.

\begin{proof}
As stated in Theorem 1.1 of \cite{T27}, when $V_1(x)$ and $V_2(x)$ satisfy  $(\mathcal{D})$,  $\hat{e}(a_1,a_2,\beta)$ has a minimizer if $0<a_1,a_{2}<a^*$ and  $\beta<\sqrt{(a^{*}-a_{1})(a^{*}-a_{2})}$. We first claim that $\hat{e}(a_1, a_2, \beta)$ is continuous on the region $H:=(-\frac{a^*}{2},\frac{a^*}{2})\times(-\frac{a^*}{2},\frac{a^*}{2})\times(-\frac{a^*}{4},\frac{a^*}{4})$. Let $(u_1, u_2)$ be any non-negative minimizer of $\hat{e}(a_1, a_2, \beta)$ with $(a_1, a_2, \beta)\in H$. Using (\ref{eq1.10}) together with Cauchy's inequality, we have
\begin{align*}
	\hat{e}(a_1,a_2,\beta)=E_{a_1,a_2,\beta}(u_1,u_2)&\geq\sum_{i=1}^{2}(\frac{a^{*}-|a_{i}|}{2}-\frac{|\beta|}{2})\int_{\mathbb{R}}|u_{i}|^{4}dx\\&\geq\frac{a^*}{8}\sum_{i=1}^2\int_{\mathbb{R}}|u_i|^4dx.
\end{align*}
This indicates that
\begin{equation}\label{uniformly}
\mathrm{the~}L^4\text{-norm of minimizers of }\hat{e}(a_1, a_2, \beta)\text{ is uniformly bounded  in }H.	  
\end{equation}
We now let $(u_{i1},u_{i2})$ be a non-negative minimizer of $\hat{e}(a_{i1},a_{i2},\beta_{i})$ with $(a_{i1},a_{i2},\beta_i)\in H$, where $i=1, 2$. Then
\begin{align}\label{fangsuo}\hat{e}(a_{11},a_{12},\beta_{1})&=E_{a_{11},a_{12},\beta_{1}}(u_{11},u_{12}) \notag\\&=E_{a_{21},a_{22},\beta_2}(u_{11},u_{12})+\frac{a_{21}-a_{11}}{2}\int_{\mathbb{R}}|u_{11}|^4dx\notag\\&\quad+\frac{a_{22}-a_{12}}{2}\int_{\mathbb{R}}|u_{12}|^{4}dx+(\beta_{2}-\beta_{1})\int_{\mathbb{R}}|u_{11}|^{2}|u_{12}|^{2}dx\\&\geq\hat{e}(a_{21},a_{22},\beta_{2})+O(|(a_{21},a_{22},\beta_{2})-(a_{11},a_{12},\beta_{1})|).\notag\end{align}
In the same way, we also obtain
\begin{equation*}
	\hat{e}(a_{21},a_{22},\beta_2)\geq\hat{e}(a_{11},a_{12},\beta_1)+O(|(a_{21},a_{22},\beta_2)-(a_{11},a_{12},\beta_1)|),
\end{equation*}
which together with (\ref{fangsuo}) gives that
\begin{equation*}
	\lim_{(a_{11}, a_{12},\beta_{1})\to(a_{21},a_{22},\beta_{2})}\hat{e}(a_{11},a_{12},\beta_{1})=\hat{e}(a_{21},a_{22},\beta_{2}).
\end{equation*}
This implies that $\hat{e}(a_1,a_2,\beta)$  is continuous on $H$, and  the claim above is thus proved.

For any $(a_1, a_2, \beta)\in H$, denote $(u_{a_{1},\beta},u_{a_{2},\beta})$ as a non-negative minimizer of $\hat{e}(a_{1},a_{2},\beta)$. It is found that  $(u_{a_{1},\beta},u_{a_{2},\beta})$ satisfies the following Euler-Lagrange system
\begin{equation}\label{eq2.15}
	\begin{cases}(\sqrt{-\Delta}+V_1(x)-\mu_{a_1,\beta})u_{a_1,\beta}-a_1u_{a_1,\beta}^3-\beta u_{a_2,\beta}^2u_{a_1,\beta}=0\quad\mathrm{in}\quad \mathbb{R},\\(\sqrt{-\Delta}+V_2(x)-\mu_{a_2,\beta})u_{a_2,\beta}-a_2u_{a_2,\beta}^3-\beta u_{a_1,\beta}^2u_{a_2,\beta}=0 \quad \mathrm{in}\quad \mathbb{R},&\end{cases}
\end{equation}
that is, 
\begin{equation}\label{2.16}
	G_i(u_{a_1,\beta},u_{a_2,\beta},\mu_{a_i,\beta},a_i,\beta)=0,\quad i=1,2,
\end{equation}
where $(\mu_{a_1,\beta},\mu_{a_2,\beta})\in\mathbb{R}^2$ represents a Lagrange multiplier. Applying (\ref{uniformly}) and the above claim, we have
\begin{align*}\label{2.17}&E_{0,0,0}(u_{a_1,\beta},u_{a_2,\beta})\notag\\&=E_{a_1,a_2,\beta}(u_{a_1,\beta},u_{a_2,\beta})+\sum_{i=1}^2\frac{a_i}{2}\int_{\mathbb{R}}|u_{a_i,\beta}|^4dx+\beta\int_{\mathbb{R}}|u_{a_1,\beta}|^2|u_{a_2,\beta}|^2dx\\&=\hat{e}(a_1,a_2,\beta)+O(|(a_1,a_2,\beta)|)\to\hat{e}(0,0,0)\quad\mathrm{as}\quad(a_1,a_2,\beta)\to0.\notag\end{align*}

In addition, it is easy to verify that $\hat{e}(0,0,0)=\lambda_{11}+\lambda_{21}$, with $(\Psi_{11},\Psi_{21})$ being the unique non-negative minimizer of $\hat{e}(0,0,0)$, where $(\lambda_{i1},\Psi_{i1})$  \textcolor{red}{is defined in (\ref{lambda}) and the subsequent text, denoting} the first eigenpair of $\sqrt{-\Delta}+V_i(x)$ in $\mathcal{H}_i$  for $i=1, 2$. It then follows  from Lemma \ref{lemma2.1} that, for $ i=1,2$,
\begin{equation}\label{2.18}
	u_{a_{i},\beta}\to\Psi_{i1}\quad\mathrm{in}\quad\mathcal{H}_{i}\quad\mathrm{as}\quad(a_{1},a_{2},\beta)\to(0,0,0),
\end{equation} 
which together with (\ref{eq2.15}) implies that
\begin{align}\label{2.19} \mu_{a_{i},\beta}&\begin{aligned}=\int_{\mathbb{R}}|(-\Delta  ) ^{\frac{1}{4} } u_{a_i,\beta}|^2+V_i(x)u_{a_i,\beta}^2dx-a_i\int_{\mathbb{R}}|u_{a_i,\beta}|^4dx-\beta\int_{\mathbb{R}}|u_{a_1,\beta}|^2|u_{a_2,\beta}|^2dx\notag \end{aligned}\\&\to\lambda_{i1}\quad\mathrm{as}\quad(a_1,a_2,\beta)\to(0,0,0),\quad i=1,2.\end{align}
Combining  (\ref{2.18}) and (\ref{2.19}), we conclude that there exists a small constant $ \delta_1 > 0$ such that
\begin{equation*}
	\|u_{a_{i},\beta}-\Psi_{i1}\|_{\mathcal{H}_{i}}<\delta_{1}\quad\mathrm{and}\quad|\mu_{a_{i},\beta}-\lambda_{i1}|<\delta_{1}\quad\mathrm{if}\quad(a_{1},a_{2},\beta)\in B_{\delta_{1}}(\vec{0}),\quad i=1,2.
\end{equation*}
It thus follows from Lemma \ref{cunzai} and (\ref{2.16}) that
\begin{equation*}
	\mu_{a_i,\beta}=\mu_i(a_1,a_2,\beta), \ u_{a_i,\beta}=u_i(a_1,a_2,\beta),\quad\mathrm{if}\quad(a_1,a_2,\beta)\in B_{\delta_1}(\vec{0}),\quad i=1,2,
\end{equation*}
which implies that for suitably small $|(a_1,a_2,\beta)|$, $(u_{1}(a_{1},a_{2},\beta), u_{2}(a_{1},a_{2},\beta))$ is a unique non-negative minimizer of $\hat{e}(a_{1},a_{2},\beta)$. Therefore, the proof is completed.
\end{proof}

	\section{Optimal estimates} \label{section2}
In order to simplify the notations and the proof, for any fixed  $0<\beta<a^*$, we define $d_i=a_i+\beta>0$ $(i=1,2)$, and (\ref{eq1.9})   can then be rewritten as 
\begin{align}\label{new energy}E_{d_{1},d_{2}}(u_{1},u_{2})&:=\sum_{i=1}^{2}\int_{\mathbb{R}}\left(|(-\Delta  ) ^{\frac{1}{4} }u_i|^2+V_{i}(x)u_{i}^{2}-\frac{d_{i}}{2}|u_{i}|^{4}\right)dx\notag\\&\quad+\frac{\beta}{2}\int_{\mathbb{R}}\left(|u_{1}|^{2}-|u_{2}|^{2}\right)^{2}dx, \quad \text{for any } (u_1, u_2)\in \mathcal{X}. \end{align}
	 Correspondingly, the  problem (\ref{problem 1.7}) is equivalent to the following  problem
	\begin{equation} \label{new problem}
		e(d_1,d_2):=\inf_{\{(u_1,u_2)\in\mathcal{M}\}}E_{d_1,d_2}(u_1,u_2),
	\end{equation}
where $\mathcal{M}$ is given by (\ref{M}).

Moreover, we introduce the following single component minimization problem
\begin{equation}\label{single}
	e_i(d_i) := \inf_{\{ u \in \mathcal{H}_i, \ \|u\|_2^2 = 1 \}} E_{d_i}^i(u), \quad d_i > 0,
\end{equation}
where the energy functional  $E^i_{d_{i}}(u)$ is given by
\begin{equation}\label{single energy di}
	E_{d_i}^i(u) := \int_{\mathbb{R}} \left(|(-\Delta  ) ^{\frac{1}{4} }u|^2 + V_i(x)u^2 \right) dx - \frac{d_i}{2} \int_{\mathbb{R}} |u|^4 dx, \quad i=1 \ \text{or} \ 2.	
\end{equation}
Following  the proof of Theorem 1.3 in \cite{T25}, for  $i=1,2$ and $V_i(x)$ satisfying $(\mathcal{D})$, it is straightforward to deduce that $e_i(d_i)$ has at least one minimizer for any $0<d_i<a^*$, while $e_i(d_i)$ has no minimizers if $d_i\geq a^*$. The proof is somewhat standard, and we omit it here.\\
\noindent Also, for $i=1,2$, we set
\begin{equation}\label{max}
	p_i:=\max\{p_{ij}, j=1,\cdots, n_i \}>0,
\end{equation}
and let $\hat{\lambda}_{ij}\in(0,\infty]$ be given by 
\begin{equation*}
	\hat{\lambda}_{ij}=\left({p_i}\int_{\mathbb{R}}|x|^{p_i}Q^2(x)\mathrm{d}x\lim_{x\to x_{ij}}\frac{V_i(x)}{|x-x_{ij}|^{p_i}}\right)^{\frac{1}{1+p_i}},
\end{equation*}
where $j=1,\cdots, n_i$ and  $V_i(x)$ satisfies  (\ref{form 1}).\\
Moreover, for $i=1,2$, $j=1,\textcolor{red}{\cdots},n_i$, we denote $\hat{\lambda}_i=\min\{\hat{\lambda}_{i1},\cdots,\hat{\lambda}_{in_i}\}$ and define 
\begin{equation}\label{faltt}
	\hat{\mathcal{Z}}_i:=\{x_{ij}:\hat{\lambda}_{ij}=\hat{\lambda}_{i}\}
\end{equation}
as the locations of the flattest global minima of $V_i(x)$. 

 We now present the following  optimal
 energy estimates as $(d_1, d_2)\nearrow(a^*,a^*)$, which are crucial for proving  Theorem \ref{blow}.

 \begin{proposition}\label{pro 3.1}
 	Assume  $0<\beta<a^{*}$, $p_{ij}\in (0, 1)$, and $V_i(x)$  satisfies (\ref{form 1}) - (\ref{point}), where $i=1, 2$, $1\leq j\leq n_{i}$. Then there exist  $C_1$, $C_2>0$, independent of $d_1$ and $d_2$, such that
 	\begin{equation}\label{3.3}
 		C_1\left(a^*-\frac{d_1+d_2}{2}\right)^{\frac{p_0}{p_0+1}}\leq e(d_1,d_2)\leq C_2\left(a^*-\frac{d_1+d_2}{2}\right)^{\frac{p_0}{p_0+1}}\quad as\quad(d_1, d_2)\nearrow(a^*,a^*),
 	\end{equation}
 	where $p_0 > 0$ is defined by (\ref{index}). Furthermore, if $(u_{d_{1}},u_{d_{2}})$ is a  non-negative minimizer of $e(d_{1}, d_{2})$, then there exist $C_3$, $C_4>0$, independent of $d_1$ and $d_2$, such that for $i=1, 2$,
 	\begin{equation}\label{3.4}
 		C_3\left(a^*-\frac{d_1+d_2}{2}\right)^{-\frac{1}{p_0+1}}\leq\int_{\mathbb{R}}|u_{d_i}|^4dx\leq C_4\left(a^*-\frac{d_1+d_2}{2}\right)^{-\frac{1}{p_0+1}}
 	\end{equation}
 	as $(d_1, d_2)\nearrow(a^*,a^*)$.
 \end{proposition}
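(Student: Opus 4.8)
The proof splits naturally into an upper bound and a lower bound for $e(d_1,d_2)$, followed by the $L^4$-estimates for the minimizer components. For the upper bound I would construct an explicit trial pair. Fix a flattest common minimum point $\bar x_0\in\mathcal{Z}$ (so $\bar p_j=p_0$ at that point and $\gamma_j=\gamma$). Choose, for a scaling parameter $\tau>0$ to be optimized, the pair
\begin{equation*}
	u_i^{\tau}(x)=A_i\,\tau^{1/2}\,\frac{Q(\tau(x-\bar x_0))}{\|Q\|_2},\qquad i=1,2,
\end{equation*}
with $A_i$ chosen so that $\|u_i^{\tau}\|_2=1$ (here $A_i=1$, since $Q/\|Q\|_2$ is already $L^2$-normalized after rescaling). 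Because the two components are equal and both equal the same rescaled profile, the coupling term $\frac{\beta}{2}\int(|u_1^\tau|^2-|u_2^\tau|^2)^2\,dx$ vanishes. Using the Pohozaev identities (\ref{eq1.11}) one computes $\int|(-\Delta)^{1/4}u_i^\tau|^2\,dx=\tau\,\|(-\Delta)^{1/4}Q\|_2^2/\|Q\|_2^2=\tau$ and $\frac{d_i}{2}\int|u_i^\tau|^4\,dx=\frac{d_i}{2}\cdot\frac{\tau}{\|Q\|_2^4}\int Q^4\,dx=\frac{d_i\tau}{a^*}$, so the nonlocal-minus-quartic part contributes $\sum_i\tau(1-d_i/a^*)=\tau(2-\frac{d_1+d_2}{a^*})=\frac{2\tau}{a^*}(a^*-\frac{d_1+d_2}{2})$. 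For the potential part, the decay estimate (\ref{decay}) $Q(x)=O(|x|^{-2})$ together with $p_{ij}<1<3$ guarantees $|x|^{p_0}Q^2(x)\in L^1(\mathbb{R})$, and a change of variables plus the local form (\ref{form 1}) of $V_i$ near $\bar x_0$ gives $\int V_i(x)|u_i^\tau|^2\,dx=\tau^{-p_0}\cdot\frac{1}{\|Q\|_2^2}\big(\frac12\lim_{x\to x_0}\frac{V_i(x)}{|x-x_0|^{p_0}}\int|y|^{p_0}Q^2(y)\,dy+o(1)\big)$; summing over $i$ produces $\tau^{-p_0}\cdot\frac{\gamma}{2\|Q\|_2^2}\int|y|^{p_0}Q^2\,dy\cdot(1+o(1))$. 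Thus $E_{d_1,d_2}(u_1^\tau,u_2^\tau)\le \frac{2\tau}{a^*}\big(a^*-\frac{d_1+d_2}{2}\big)+C\tau^{-p_0}$; optimizing over $\tau$ (i.e.\ $\tau\sim(a^*-\frac{d_1+d_2}{2})^{-1/(p_0+1)}$) yields the upper bound in (\ref{3.3}) with an explicit constant $C_2$.

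\textbf{Lower bound.} Let $(u_{d_1},u_{d_2})$ be a non-negative minimizer. Drop the nonnegative coupling term: $e(d_1,d_2)\ge\sum_{i=1}^2 E_{d_i}^i(u_{d_i})\ge\sum_{i=1}^2 e_i(d_i)\ge 2\min\{e_1(d_1),e_2(d_2)\}$ — but this is too lossy because $e_i(d_i)$ involves $d_i$, not $\frac{d_1+d_2}{2}$. Instead I would argue directly on the pair. From the Gagliardo–Nirenberg inequality (\ref{eq1.10}) applied to each $u_{d_i}$ with $\|u_{d_i}\|_2=1$,
\begin{equation*}
	\frac{d_i}{2}\int|u_{d_i}|^4\,dx\le\frac{d_i}{a^*}\int|(-\Delta)^{1/4}u_{d_i}|^2\,dx,
\end{equation*}
so $E_{d_1,d_2}(u_{d_1},u_{d_2})\ge\sum_i\big(1-\frac{d_i}{a^*}\big)\int|(-\Delta)^{1/4}u_{d_i}|^2\,dx+\sum_i\int V_i|u_{d_i}|^2\,dx$. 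Since $a^*-d_i$ need not equal $a^*-\frac{d_1+d_2}{2}$, I would symmetrize: exchanging the roles via the elementary inequality and using that both $d_i\nearrow a^*$, one has $1-d_i/a^*\ge \frac{1}{a^*}(a^*-\frac{d_1+d_2}{2})-|d_1-d_2|/(2a^*)$; but along the relevant sequences $|d_1-d_2|$ is controlled, or more cleanly, by the convexity/averaging trick one shows $\sum_i(1-d_i/a^*)t_i\ge \frac{2}{a^*}(a^*-\frac{d_1+d_2}{2})\min\{t_1,t_2\}$ is false in general, so the right move is: write $s:=a^*-\frac{d_1+d_2}{2}$ and note $a^*-d_i= s + (\frac{d_1+d_2}{2}-d_i)$, the correction being $\pm\frac{d_1-d_2}{2}$, and absorb it since the kinetic energies of the two components are comparable at the minimizer (this comparability itself follows from the coupling term being small, which forces $\int|u_{d_1}|^2|u_{d_2}|^2$ close to both $\int|u_{d_1}|^4$ and $\int|u_{d_2}|^4$). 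Granting that reduction, set $t_i=\int|(-\Delta)^{1/4}u_{d_i}|^2\,dx$. A standard lower bound for the potential term (using $(\mathcal{D})$, that $\inf(V_1+V_2)=0$ is attained, and the local power behavior (\ref{form 1})) gives $\int V_i|u_{d_i}|^2\,dx\ge c\,t_i^{-p_0}-o(t_i^{-p_0})$ after using that concentration can only help near a point where $\bar p_j\ge p_0$; combined with the kinetic term one minimizes $\frac{s}{a^*}t+Ct^{-p_0}$ over $t>0$ to obtain $e(d_1,d_2)\ge C_1 s^{p_0/(p_0+1)}$. This is the step I expect to be the main obstacle: making rigorous the lower bound $\int V_i|u_{d_i}|^2\,dx\gtrsim t_i^{-p_0}$ uniformly, which requires a careful concentration-compactness / blow-up argument (one rescales $u_{d_i}$ by $t_i^{1/2}$ at its concentration point, shows the rescaled sequence converges to a minimizer of the limiting problem, and reads off the constant) — and the bookkeeping to keep $t_1,t_2$ comparable so that $a^*-d_i$ may be replaced by $s$ up to lower order.

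\textbf{$L^4$-estimates.} Once (\ref{3.3}) holds, the two-sided bound (\ref{3.4}) follows by feeding the energy bounds back into the Euler–Lagrange / energy identity. For the upper bound in (\ref{3.4}): from $e(d_1,d_2)=E_{d_1,d_2}(u_{d_1},u_{d_2})$ and the Gagliardo–Nirenberg inequality one gets $\sum_i(1-d_i/a^*)t_i\le e(d_1,d_2)\le C_2 s^{p_0/(p_0+1)}$, hence each $t_i\le C s^{p_0/(p_0+1)}/s=C s^{-1/(p_0+1)}$ (again using $a^*-d_i\sim s$), and then (\ref{eq1.10}) gives $\int|u_{d_i}|^4\le\frac{2}{a^*}t_i\le C_4 s^{-1/(p_0+1)}$. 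For the lower bound in (\ref{3.4}): if $\int|u_{d_i}|^4$ were $o(s^{-1/(p_0+1)})$ for some $i$, then that component's quartic term would be negligible and $E_{d_i}^i(u_{d_i})\ge(1-o(1))t_i\ge 0$ would force, together with the potential term, $e(d_1,d_2)\ge$ (contribution of the $i$-th component) which, when the other component is handled by the trial-function upper bound, contradicts the matching exponent $s^{p_0/(p_0+1)}$ in (\ref{3.3}) — more precisely, $E^i_{d_i}(u_{d_i})\ge c\,s^{p_0/(p_0+1)}$ would be needed from \emph{each} component, and one checks via the scaling $\int|(-\Delta)^{1/4}u_{d_i}|^2\sim(\int|u_{d_i}|^4)$ (forced by the minimization, since at a near-optimizer the GN inequality is nearly saturated) that $t_i\sim\int|u_{d_i}|^4$, closing the loop: $\int|u_{d_i}|^4\ge C_3 s^{-1/(p_0+1)}$. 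The saturation of the Gagliardo–Nirenberg inequality along the minimizing sequence — equivalently, that each $u_{d_i}$ looks like a rescaled $Q$ after normalization — is what ties $t_i$ and $\int|u_{d_i}|^4$ together and is used repeatedly; it is established by the same blow-up analysis underlying the lower bound in (\ref{3.3}).
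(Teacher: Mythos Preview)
Your upper-bound construction is essentially the paper's, but with a genuine technical omission: the trial profile must be truncated. Globally $V_i(x)\sim |x|^{\sum_j p_{ij}}$ with $n_i$ arbitrary, so $\int V_i(x)|u_i^{\tau}|^2\,dx$ need not be finite for a bare rescaled $Q$. The paper multiplies by a fixed cutoff $\eta((x-\bar x_0)/R)$; the cutoff errors are $O(\tau^{-1})$ (from the polynomial decay (\ref{decay})) and are lower order than $\tau^{-p_0}$ precisely because $p_0<1$. This is minor but necessary.

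The lower bound is where your sketch has the real gap. You correctly isolate the obstacle --- proving $\int V_i|u_{d_i}|^2\gtrsim t_i^{-p_0}$ via a blow-up argument --- but then treat it as a black box. The paper does not attempt to bound the potential term abstractly in terms of the kinetic energy; instead it proceeds in stages. First it proves \emph{suboptimal} two-sided $L^4$-bounds (your (\ref{L4 1})) using only the upper bound in (\ref{3.5}) and the single-component lower bound $e_i(d_i)\ge M_1(a^*-d_i)^{p_i/(p_i+1)}$. These rough bounds are enough to define the scale $\epsilon=\big(\int|u_{d_1}|^4\big)^{-1}$ and to run the concentration analysis (Lemma \ref{lemma 3.5}): the rescaled components $w_{d_{ik}}$ converge strongly in $H^{1/2}$ to a ground-state profile $w_0$. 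Only \emph{after} this convergence is established does the paper compute the lower bound, directly from the energy at the minimizer via (\ref{3.59}), reading off the exponent $\bar p_{j_0}$ at the limit point and forcing $\bar p_{j_0}=p_0$ by comparison with the upper bound. Your proposed route --- optimize $\frac{s}{a^*}t+Ct^{-p_0}$ --- skips the entire concentration step, and without it there is no mechanism ensuring the minimizer concentrates at a point where the local exponent equals $p_0$ rather than some smaller $\bar p_j$.

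Your handling of the asymmetry $a^*-d_1\neq a^*-d_2$ is also where you run into trouble (you flag this yourself). The paper's device is clean: it proves $\int(|u_{d_1}|^2-|u_{d_2}|^2)^2\,dx\le C\,s^{p_0/(p_0+1)}$ directly from the coupling term and the upper bound, then uses Cauchy--Schwarz to show $\int|u_{d_1}|^4/\int|u_{d_2}|^4\to 1$. With this ratio result in hand, one may assume $d_1\le d_2$ so that $a^*-d_1\ge s$, get the upper $L^4$-bound for $u_{d_1}$ by Gagliardo--Nirenberg, and transfer it to $u_{d_2}$ via the ratio. This is the missing ingredient that makes your ``$t_1,t_2$ comparable'' assertion rigorous. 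Finally, the sharp $L^4$ lower bound in (\ref{3.4}) is not obtained by your saturation-of-GN argument; the paper argues by contradiction that if $\epsilon_k$ were not comparable to $s^{1/(p_0+1)}$ then the computation (\ref{3.59}) would force $e(d_{1k},d_{2k})\gg s^{p_0/(p_0+1)}$, contradicting (\ref{3.3}).
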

 
In the sequel, we focus on proving Proposition \ref{pro 3.1} and   begin with the   following lemma.
\begin{lemma}\label{single energy}
	Suppose that $V_1(x)$ and $V_2(x)$ satisfy $(\mathcal{D})$ and  (\ref{form 1}). If    \textcolor{red}{$0<p_{ij}<1$ for $i=1, 2$ and $j=1,\cdots, n_i$}, then there exist two positive constants $M_{1}<M_{2}$, independent of $d_1$ and $d_2$, such that
	\begin{equation}\label{bounded}
		M_1(a^*-d_i)^{\frac{p_i}{p_i+1}}\leq e_i(d_i)\leq M_2(a^*-d_i)^{\frac{p_i}{p_i+1}} \quad \text{for} \quad
		  0<d_i<a^*,
	\end{equation}
	where $p_i$ is defined in (\ref{max}).
\end{lemma}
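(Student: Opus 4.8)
\textbf{Proof strategy for Lemma \ref{single energy}.}
The plan is to establish the two-sided bound on $e_i(d_i)$ by producing a matching pair of test-function upper bound and energy lower bound, following the scheme used for single fractional Schr\"odinger equations (cf.\ the proof of the relevant estimates in \cite{T25}). Fix $i$ and write $p=p_i$. First I would construct the upper bound: choose a flattest minimum point $x_{i0}\in\hat{\mathcal{Z}}_i$ of $V_i$, and use the rescaled ground state
\[
u_{\tau}(x)=\frac{\tau^{1/2}}{\|Q\|_2}\,Q\bigl(\tau (x-x_{i0})\bigr),
\]
which lies in $\mathcal{H}_i$ and satisfies $\|u_\tau\|_2^2=1$. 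Plugging $u_\tau$ into $E_{d_i}^i$ and using the Pohozaev identities (\ref{eq1.11}) one gets
\[
E_{d_i}^i(u_\tau)=\frac{(a^*-d_i)}{a^*}\,\tau+\frac{1}{\|Q\|_2^2}\int_{\mathbb R}V_i\!\Bigl(x_{i0}+\tfrac{x}{\tau}\Bigr)Q^2(x)\,dx .
\]
Since near $x_{i0}$ one has $V_i(x)\sim \bigl(\lim_{x\to x_{i0}}\frac{V_i(x)}{|x-x_{i0}|^{p}}\bigr)|x-x_{i0}|^{p}$, and because $p<3$ guarantees $|x|^{p}Q^2(x)\in L^1(\mathbb R)$ by the decay estimate (\ref{decay}), the potential term is $\bigl(1+o(1)\bigr)\,\tau^{-p}\,\|Q\|_2^{-2}\,c_{i0}\int_{\mathbb R}|x|^{p}Q^2$ as $\tau\to\infty$. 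Optimizing the resulting expression $C'(a^*-d_i)\tau+C''\tau^{-p}$ over $\tau$ gives the optimal choice $\tau\sim(a^*-d_i)^{-1/(p+1)}$ and hence the upper bound $e_i(d_i)\le M_2(a^*-d_i)^{p/(p+1)}$.

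For the lower bound I would argue by contradiction / via direct estimation on an arbitrary minimizer $u$ of $e_i(d_i)$ (which exists for $0<d_i<a^*$ as noted after (\ref{single energy di})). From the Gagliardo--Nirenberg inequality (\ref{eq1.10}) together with $\|u\|_2^2=1$,
\[
E^i_{d_i}(u)\ge\Bigl(1-\frac{d_i}{a^*}\Bigr)\int_{\mathbb R}|(-\Delta)^{1/4}u|^2\,dx+\int_{\mathbb R}V_i(x)u^2\,dx .
\]
Set $t:=\int_{\mathbb R}|(-\Delta)^{1/4}u|^2\,dx$. One shows that a minimizer must concentrate: using compactness and a rescaling argument (or directly, arguing that if $\int V_iu^2$ stays bounded below then $e_i(d_i)$ cannot tend to $0$, contradicting the upper bound already proved), one controls $\int V_iu^2$ from below in terms of $t$. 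Concretely, a standard scaling/blow-up argument shows that for a sequence $d_i\nearrow a^*$ the minimizers, after rescaling by $t^{1/2}$ and translating to a near-minimum of $V_i$, converge to $Q/\|Q\|_2$; combined with the local power-law behaviour of $V_i$ this yields $\int_{\mathbb R}V_iu^2\,dx\ge \bigl(c-o(1)\bigr)\,t^{-p}$. Hence $E^i_{d_i}(u)\ge \bigl(1-\tfrac{d_i}{a^*}\bigr)t+\bigl(c-o(1)\bigr)t^{-p}$, and minimizing the right-hand side over $t>0$ gives $e_i(d_i)\ge M_1(a^*-d_i)^{p/(p+1)}$, with $M_1<M_2$ after possibly shrinking $M_1$.

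The main obstacle is the lower bound, specifically showing that $\int_{\mathbb R}V_i(x)u^2\,dx$ cannot be much smaller than the scale $t^{-p}$ dictated by the power-law structure of $V_i$ near its minima. In the classical (exponential-decay) setting one can localize $Q$ away from its tail cheaply, but here $Q$ decays only like $|x|^{-2}$ (estimate (\ref{decay})), so the tail of the rescaled minimizer genuinely contributes to $\int V_i u^2$; this is exactly why the hypothesis $p<1$ (indeed $p<3$ for integrability, and a bit more for the sharp estimate; see the forthcoming (\ref{limitiation})) is needed, and the delicate point is to make the ``$o(1)$'' rigorous — i.e.\ to show the rescaled minimizers do not spread mass to spatial infinity — which I would handle by a concentration-compactness or a direct truncation argument exploiting $\lim_{|x|\to\infty}V_i(x)=\infty$ together with the already-established upper bound on $e_i(d_i)$.
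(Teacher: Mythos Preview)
Your upper bound is the same idea as the paper's, but note a technical gap: the bare rescaling $u_\tau(x)=\tau^{1/2}\|Q\|_2^{-1}Q(\tau(x-x_{i0}))$ need not lie in $\mathcal{H}_i$, because $V_i(x)=h_i(x)\prod_j|x-x_{ij}|^{p_{ij}}$ can grow like $|x|^{\sum_j p_{ij}}$ at infinity while $Q^2$ decays only like $|x|^{-4}$, so $\int V_i u_\tau^2$ may diverge when $\sum_j p_{ij}\ge 3$. The paper fixes this by multiplying by a cutoff $\eta((x-x_{i0})/R)$ and controlling the resulting errors via (\ref{decay}); this is also what makes your ``near $x_{i0}$'' expansion of the potential term legitimate.

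The real divergence is in the lower bound. Your plan is to run a blow-up/concentration analysis on minimizers to extract $\int V_i u^2\ge (c-o(1))t^{-p}$ and then optimize. This is much heavier than needed and is in danger of circularity: proving that rescaled minimizers converge to (a rescaling of) $Q$ is exactly the kind of statement that, in this paper, is established \emph{after} and \emph{using} the energy bounds of Lemma~\ref{single energy} (see Lemma~\ref{lemma 3.5}). The paper's argument is entirely elementary and avoids any analysis of minimizers. From (\ref{eq1.10}) one has, for any $u$ with $\|u\|_2=1$,
\[
E_{d_i}^i(u)\;\ge\;\int_{\mathbb R}V_i(x)u^2\,dx+\frac{a^*-d_i}{2}\int_{\mathbb R}u^4\,dx.
\]
Now introduce a free level $\gamma>0$ and complete the square pointwise:
\[
(V_i(x)-\gamma)\,u^2+\frac{a^*-d_i}{2}\,u^4\;\ge\;-\frac{[\gamma-V_i(x)]_+^{\,2}}{2(a^*-d_i)},
\]
so that
\[
E_{d_i}^i(u)\;\ge\;\gamma-\frac{1}{2(a^*-d_i)}\int_{\mathbb R}[\gamma-V_i(x)]_+^{\,2}\,dx.
\]
For small $\gamma$ the sublevel set $\{V_i<\gamma\}$ is contained in $n_i$ intervals of radius $\lesssim\gamma^{1/p_i}$ around the $x_{ij}$, on which $V_i(x)\ge (|x-x_{ij}|/k_i)^{p_i}$; hence $\int_{\mathbb R}[\gamma-V_i]_+^2\le C\gamma^{2+1/p_i}$. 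Choosing $\gamma=C_i(a^*-d_i)^{p_i/(p_i+1)}$ with $C_i$ small gives the lower bound directly. This bypasses concentration-compactness, limiting profiles, and the delicate tail issue you flagged; the only place the polynomial decay of $Q$ enters is the upper bound, where it forces $p_i<1$ via (\ref{limitiation}).
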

\begin{proof}
	From Lemma 2.5 of \cite{T20}, we know that for $i=1,2$, $e_i(d_i)$ is uniformly continuous with respect to $d_i$ on $(0,a^{*})$, and thus $e_i(d_i)$ is uniformly bounded for $0< d_i< a^*$. Moreover, since $e_i(d_i)$ is decreasing with respect to  $d_i$ on $(0,a^{*})$, it thus suffices to consider the case where $d_i$ is close to $a^*$ for $i=1, 2$. We first address the lower bound.
	
	For any $\hat{\gamma}_i>0$  $(i=1,2)$ and $u\in H^{\frac{1}{2}}(\mathbb{{R}})$ with $ \|u\|_2^2=1$, using  (\ref{eq1.10}), a direct calculation gives that
	\begin{align}\label{eq 3.9}E_{d_i}^i(u)&\geqslant\int_{\mathbb{R}}V_i(x)u^{2}dx+\frac{a^{*}-d_i}{2}\int_{\mathbb{R}}|u(x)|^{4}dx \notag  \\&=\hat{\gamma}_i+\int_{\mathbb{R}}\left[(V_i(x)-\hat{\gamma}_i)u^{2}+\frac{a^{*}-d_i}{2}|u(x)|^4\right]dx\\&\geqslant\hat{\gamma}_i-\frac{1}{2(a^*-d_i)}\int_{\mathbb{R}}\left[\hat{\gamma}_i-V_i(x)\right]_{+}^{2}dx,\quad i=1, 2,\notag \end{align}
	where $[\ \cdot \ ]_+=\max\{0,\cdot\}$ denotes the positive part. For $i=1,2$, $j=1,\textcolor{red}{\cdots},n_i$ and small enough $\hat{\gamma}_i$, the set 
	\begin{equation*}
		\{x\in\mathbb{R}:V_i(x)<\hat{\gamma}_i\}
	\end{equation*}
	is contained in the disjoint union of $n_i$ balls of radius at most $k_i\hat{\gamma}_i^{\frac{1}{p_i}}$, centered at $x_{ij}$, for a suitable $k_i>0$. Furthermore, $V_i(x)\geq(\frac{|x-x_{ij}|}{k_i})^{p_{i}}$ on these balls for $i=1, 2$,  $j=1,\textcolor{red}{\cdots},n_i$. Therefore,
	\begin{equation}\label{eq 3.10}
		\int_{\mathbb{R}}\left[\hat{\gamma}_i-V_i(x)\right]_{+}^{2}dx\leq n_i\int_{\mathbb{R}}\left[\hat{\gamma}_i-(\frac{\left|x\right|}{k_i})^{p_i}\right]_{+}^{2}dx\leq 2n_ik_i\hat{\gamma}_i^{2+\frac{1}{p_i}},\quad i=1,2.
	\end{equation}
	
	Let $\hat{\gamma}_i=C_i(a^*-d_i)^{\frac{p_i}{p_i+1}}$, where $C_i$ is a small positive constant for $i=1,2$. It then follows from (\ref{eq 3.9}) and (\ref{eq 3.10}) that there exists  $M_1>0$ such that
	\begin{equation*}
		e_i(d_i)\geq M_1(a^*-d_i)^{\frac{p_i}{p_i+1}} ,\quad i=1, 2.
	\end{equation*}
	
Before turning to the proof of the upper bound in (\ref{bounded}), we define 
	\begin{equation}\label{phi}
		\phi(x)=A_{R\tau}\frac{\tau^\frac{1}{2}}{\|Q\|_2}\eta\left(\frac{x-x_0}{R}\right)Q\left(\tau(x-x_0)\right),
	\end{equation}
	where $x_0\in \mathbb{{R}}$, $\tau>0$, $R>0$, $Q$ is the unique  radially symmetric ground state of (\ref{classical eq}), $\eta(x)\in  C_0^\infty(\mathbb{R})$ is a non-negative cut-off function satisfying
	\begin{equation*}\label{3.12}
		\eta(x)=1\quad\mathrm{for}\quad|x|\leq1;\quad \eta(x)=0\quad\mathrm{for}\quad|x|>2;\quad0\leq\eta(x)\leq1.
	\end{equation*}
	Here, $A_{R\tau}$ is chosen such that $\int_\mathbb{R}\phi^2 dx=1$. It then follows from the polynomial decay of $Q$ (\ref{decay}) that
	\begin{equation}\label{ARt}
		\frac{1}{A_{R\tau}^{2}}=\frac{1}{\|Q\|_{2}^{2}}\int_{\mathbb{R}}\eta^{2}\left(\frac{x}{R\tau}\right)Q^{2}(x)dx=1+O((R\tau)^{-3})\quad \mathrm{as}\quad R\tau\to\infty.
	\end{equation}
	Combining this with Lemma 3.1 in \cite{T21}, (\ref{decay})  and (\ref{eq1.11}), if we fix $R >0$, we  obtain that, for $i=1, 2$,
	\begin{align}\label{3.14}&\int_{\mathbb{R}}|(-\Delta)^{\frac{1}{4}}\phi(x)|^{2}  dx-\frac{d_i}{2}\int_{\mathbb{R}}|\phi(x)|^{4} dx\notag\\&=\frac{\tau}{\|Q\|_2^2}\left[\int_{\mathbb{R}}|(-\Delta)^{\frac{1}{4}}Q|^2 dx -\frac{d_i}{2\|Q\|_{2}^{2}}\int_{\mathbb{R}}|Q|^{4} dx+O(\tau^{-2})\right]
	\notag	\\	&=\frac{\tau}{2\|Q\|_{2}^{2}}\left[\left(1-\frac{d_i}{\|Q\|_{2}^{2}}\right)\int_{\mathbb{R}}|Q|^{4}dx+O(\tau^{-2})\right]\\&= \left(1-\frac{d_i}{\|Q\|_2^2}\right)\tau+O(\tau^{-1})\quad \mathrm{as} \quad\tau\to\infty.\notag\end{align}

	Next, we  prove the upper bound in (\ref{bounded}). For $i=1, 2$, using the trial function $\phi(x)$ defined in (\ref{phi}) and picking $x_0\in \hat{\mathcal{Z}}_i$, where $\hat{\mathcal{Z}}_i$ is defined in (\ref{faltt}),  we can fix $R>0$ small enough such that
	\begin{equation*}
		V_i(x)\leqslant C|x-x_0|^{p_i}\quad\mathrm{for}\quad|x-x_0|\leqslant2R,
	\end{equation*}in which case, we deduce that
	\begin{equation}\label{3.15}
		\int_{\mathbb{R}}V_i(x)\phi^2\mathrm{d}x\leqslant C\tau^{-p_i}A_{R\tau}^2\int_{\mathbb{R}}|x|^{p_i}Q^2(x)dx.
	\end{equation}
	From (\ref{ARt}), (\ref{3.14}) and (\ref{3.15}), we infer that for large $\tau$,
	\begin{equation}\label{limitiation}
		e_i(d_i)\leq\frac{\tau(a^*-d_i)}{2\|Q\|_2^4}\int_\mathbb{R}|Q(x)|^4dx+C\tau^{-p_i}\int_\mathbb{R}|x|^{p_i}Q^2(x)dx+O(\tau^{-1}),\quad i=1,2.
	\end{equation}
	By taking $\tau=(a^{*}-d_i)^{\frac{-1}{p_i+1}}$, the desired upper bound of $e_i(d_i)$ is thus obtained when \textcolor{red}{$0<p_{ij}<1$ for $i=1, 2$ and $j=1,\cdots, n_i$}. Therefore, the proof of this lemma is completed.
\end{proof}

For any fixed $0 < \beta < a^*$, let $(u_{d_1},u_{d_2})$  denote a non-negative minimizer of problem (\ref{new problem}). We establish the following energy estimates for $e(d_1,d_2)$.

\begin{lemma}\label{lemma 5.1}
	Let $e_i(d_i)$ be defined by (\ref{single}) for $i=1, 2$. Then, under the assumptions of Proposition \ref{pro 3.1}, there exists a constant $C > 0$, independent of $d_1$ and $d_2$, such that
	\begin{equation}\label{3.5}
		e_1(d_1)+e_2(d_2)+\frac{\beta}{2}\int_{\mathbb{R}}\left(|u_{d_1}|^2-|u_{d_2}|^2\right)^2dx\leq e(d_1,d_2)\leq C\left(a^*-\frac{d_1+d_2}{2}\right)^{\frac{p_0}{p_0+1}}
	\end{equation}
as $(d_1,d_2)\nearrow(a^*,a^*)$.
\begin{proof}
	Since $(u_{d_1},u_{d_2})$ is a non-negative minimizer of (\ref{new problem}), it follows from (\ref{single energy di}) that, for any $(u,v)\in\mathcal{X}$,
	\begin{equation*}
		E_{d_1,d_2}(u,v)=E_{d_1}^1(u)+E_{d_2}^2(v)+\frac{\beta}{2}\int_{\mathbb{R}}\left(|u|^2-|v|^2\right)^2dx,
	\end{equation*}
which indicates that 
\begin{equation*}
	e(d_1,d_2)\geq e_1(d_1)+e_2(d_2)+\frac{\beta}{2}\int_{\mathbb{R}}\left(|u_{d_1}|^2-|u_{d_2}|^2\right)^2dx.
\end{equation*}
It thus proves the lower bound of (\ref{3.5}).
Next, we turn to the proof of the upper bound of (\ref{3.5}). \textcolor{red}{Without loss of generality}, given $\bar{p}_{1}$ and $p_0$ as  in (\ref{index}), we may suppose that $p_0=\bar{p}_{1}=\min\left\{p_{11}, p_{21}\right\}>0$ and  $p_{11}\leq p_{21}$. Consider the trial function ($\phi$, $\phi$) defined in (\ref{phi}) with $x_0=x_{11}$ and choose $R>0$ small enough such that, for $i=1,2$,
\begin{equation*}
	V_i(x)\leqslant C|x-x_{11}|^{p_{i1}}\quad\mathrm{for}\quad|x-x_{11}|\leqslant2R.
\end{equation*}
It then follows from (\ref{decay}) that,
\begin{equation*}
\int_{\mathbb{R}}V_i(x)\phi^2\mathrm{d}x\leqslant C\tau^{-p_{i1}}A_{R\tau}^2\int_{\mathbb{R}}|x|^{p_{i1}}Q^2(x)\mathrm{d}x \leqslant	C\tau^{-p_{i1}}\quad\mathrm{as}\quad\tau\to\infty,\quad i=1, 2,
\end{equation*}
which together with (\ref{3.14}) implies that 
\begin{equation}\label{3.200}
	E_{d_1,d_2}(\phi,\phi)\leq\frac{2}{a^*}\left(a^*-\frac{d_1+d_2}{2}\right)\tau+C\left(\tau^{-p_{11}}+\tau^{-p_{21}}\right).
\end{equation}
By the fact that $p_{0}=p_{11}\leq p_{21}$ and taking $\tau=\left(a^*-\frac{d_1+d_2}{2}\right)^{\frac{-1}{p_0+1}}$ in (\ref{3.200}), we derive that
\begin{equation*}
	e(d_1,d_2)\leq E_{d_1,d_2}(\phi,\phi)\leq C\left(a^*-\frac{d_1+d_2}{2}\right)^{\frac{p_0}{p_0+1}},
\end{equation*}
which thus yields the  upper bound of (\ref{3.5}). This completes the proof of Lemma \ref{lemma 5.1}.

  It is  known from Lemma 5.2 of  \cite{T18} that the energy estimates of $e(d_1)$ and $e(d_2)$ are crucial for deriving $L^4(\mathbb{{R}})$ - estimates of $(u_{d_1},u_{d_2})$. We  thus apply Lemma \ref{single energy} and  Lemma \ref{lemma 5.1} to establish the following estimates for the minimizers.

\begin{lemma}\label{L4}
Under the assumptions stated in Proposition \ref{pro 3.1}, we have
\begin{equation}\label{L4 1}
	C\left(a^*-\frac{d_1+d_2}{2}\right)^{-\frac{1}{p_0+1}\frac{p_0}{p_i}}\leq\int_{\mathbb{R}}|u_{d_i}|^4dx\leq\frac{1}{C}\left(a^*-\frac{d_1+d_2}{2}\right)^{-\frac{1}{p_0+1}}\quad as\quad(d_1,d_2)\nearrow(a^*,a^*),
\end{equation}
and
\begin{equation}\label{L4 2}
	\lim_{(d_1, d_2)\nearrow(a^*,a^*)}\frac{\int_{\mathbb{R}}|u_{d_1}|^4dx}{\int_{\mathbb{R}}|u_{d_2}|^4dx}=1,
\end{equation}
where $p_i\geq p_0$, and $p_i>0$ is defined in (\ref{max}), $i=1,2$.
\end{lemma}
\begin{proof}
	We first consider the lower bound of (\ref{L4 1}). To this end, select any $0<a<d_{i}<a^{*}(i=1,2)$, and see that, for $i=1,2$,
	\begin{equation}\label{3.24}
		e_{i}(a)\leq E_{d_{i}}^{i}(u_{d_{i}})+\frac{d_{i}-a}{2}\int_{\mathbb{R}}|u_{d_{i}}|^{4}dx.
	\end{equation}
\textcolor{red}{By (\ref{new problem}) and (\ref{single energy di}), we have 
	\begin{equation*}
		e(d_1,d_2)=E_{d_1}^1(u_{d_1})+E_{d_2}^2(u_{d_2})+\frac{\beta}{2}\int_{\mathbb{R}}\left(|u_{d_1}|^{2}-|u_{d_2}|^{2}\right)^{2}dx.\end{equation*}}
\textcolor{red}{It then follows} from Lemma \ref{lemma 5.1} that, for $i=1,2$,
\begin{equation}\label{3.25}
	e_i(d_i)\leq E_{d_i}^i(u_{d_i})\leq C\left(a^*-\frac{d_1+d_2}{2}\right)^{\frac{p_0}{p_0+1}},
\end{equation}
which combined with (\ref{bounded}) and (\ref{3.24}) gives that
\begin{align}\label{3.26}\frac{1}{2}\int_{\mathbb{R}}|u_{d_i}|^4dx&\geq\frac{e_i(a)-C\left(a^*-\frac{d_1+d_2}{2}\right)^{\frac{p_0}{p_0+1}}}{d_i-a}\notag\\&\geq\frac{M_1(a^*-a)^{\frac{p_i}{p_i+1}}-C\left(a^*-\frac{d_1+d_2}{2}\right)^{\frac{p_0}{p_0+1}}}{d_i-a}.\end{align}
Take $a=d_{i}-C_{1}\left(a^{*}-\frac{d_{1}+d_{2}}{2}\right)^{\frac{p_{0}(p_{i}+1)}{p_{i}(p_{0}+1)}}$, where $C_1>0$ is large enough such that
$M_1C_{1}^{\frac{p_{i}}{p_{i}+1}}>2C$. It then follows from (\ref{3.26}) that
\begin{equation}\label{3.27}
	\int_{\mathbb{R}}|u_{d_i}|^4dx\geq C_2\left(a^*-\frac{d_1+d_2}{2}\right)^{-\frac{1}{p_0+1}\frac{p_0}{p_i}},\quad i=1,2.
\end{equation}
Hence,  the lower bound of (\ref{L4 1}) is proven.

We next focus on  the upper bound of (\ref{L4 1}).  \textcolor{red}{Without loss of generality}, we can suppose that $d_{1}\leq d_{2}\leq a^{*}$ and $(d_{1},d_{2})\neq(a^{*},a^{*})$. Using (\ref{eq1.10}), we then get that
\begin{equation*}
	E_{d_1}^1(u_{d_1})\geq\frac{a^*-d_1}{2}\int_{\mathbb{R}}|u_{d_1}|^4dx\geq\frac{1}{2}\left(a^*-\frac{d_1+d_2}{2}\right)\int_{\mathbb{R}}|u_{d_1}|^4dx,
\end{equation*}
which together with (\ref{3.25}) implies that the upper bound of (\ref{L4 1}) holds for $u_{d_1}$. Similarly, the upper bound of (\ref{L4 1}) also holds  for $u_{d_2}$
when (\ref{L4 2})  holds, and then the proof is completed.

We now turn to prove (\ref{L4 2}). According to Lemma \ref{lemma 5.1}, we have
\begin{equation*}
	\int_{\mathbb{R}}\left(|u_{d_1}|^2-|u_{d_2}|^2\right)^2dx\leq C\left(a^*-\frac{d_1+d_2}{2}\right)^{\frac{p_0}{p_0+1}}\quad\mathrm{as}\quad(d_1,d_2)\nearrow(a^*,a^*),
\end{equation*}
which indicates that
\begin{align*}&\bigg|\int_{\mathbb{R}}|u_{d_{1}}|^{4}dx-\int_{\mathbb{R}}|u_{d_{2}}|^{4}dx\bigg|\notag\\&\leq\left(\int_{\mathbb{R}}\left(|u_{d_1}|^2-|u_{d_2}|^2\right)^2dx\right)^{\frac{1}{2}}\left(\int_{\mathbb{R}}\left(|u_{d_1}|^2+|u_{d_2}|^2\right)^2dx\right)^{\frac{1}{2}}\\&\leq C\left(a^{*}-\frac{d_{1}+d_{2}}{2}\right)^{\frac{p_{0}}{2(p_{0}+1)}}\left[\left(\int_{\mathbb{R}}|u_{d_{1}}|^{4}dx\right)^{\frac{1}{2}}+\left(\int_{\mathbb{R}}|u_{d_{2}}|^{4}dx\right)^{\frac{1}{2}}\right]\notag\end{align*}
as $(d_1,d_2)\nearrow(a^*,a^*)$. Noting from (\ref{3.27}) that $\int_{\mathbb{R}}|u_{d_i}|^4dx\to\infty$ as $(d_1,d_2)\nearrow(a^*,a^*)$ for $i=1,2$, we then deduce from the above estimate that (\ref{L4 2}) holds.

Next, we show that the upper estimates of (\ref{3.5})  and (\ref{L4 1}) are optimal.
From Lemma \ref{lemma 5.1}, we have
\begin{equation}\label{3.30}
	\sum_{i=1}^2\int_{\mathbb{R}}V_i(x)u_{d_i}^2dx\leq e(d_1,d_2)\leq C\left(a^*-\frac{d_1+d_2}{2}\right)^{\frac{p_0}{p_0+1}}\quad\mathrm{as}\quad(d_1,d_2)\nearrow(a^*,a^*).
\end{equation}
Define
\begin{equation}\label{3.31}
	\epsilon^{-1}(d_1,d_2):=\int_{\mathbb{R}}|u_{d_1}(x)|^4dx,\quad\mathrm{where}\quad\epsilon(d_1,d_2)>0.
\end{equation}
Then, it follows from (\ref{L4 1}) that $\epsilon(d_{1},d_{2})\to0$ as $(d_{1},d_{2})\nearrow(a^{*},a^{*})$. Furthermore, using (\ref{3.5}) and (\ref{L4 2}), we infer that, for $i=1,2$,
\begin{equation}\label{3.32}
	\int_{\mathbb{R}}|u_{d_i}|^4dx,\int_{\mathbb{R}} | (-\Delta  ) ^{\frac{1}{4} }u_{d_i}|^{2}dx\sim\epsilon^{-1}(d_1,d_2)\quad\mathrm{as}\quad(d_1,d_2)\nearrow(a^*,a^*),
\end{equation}
here we denote $f\sim g$ to mean that $f/g$ is bounded from below and above. We now set the $L^{2}(\mathbb{R})$-normalized function
\begin{equation*}\label{3.33}
\tilde{w}_{d_i}(x):=\epsilon^{\frac{1}{2}}(d_1,d_2)u_{d_i}\left(\epsilon(d_1,d_2)x\right),\quad i=1,2.
\end{equation*}
Then, by (\ref{L4 2}), (\ref{3.31}) and (\ref{3.32}), we derive that, for $i=1,2$,
\begin{equation}\label{3.34}
	\int_{\mathbb{R}}|\tilde{w}_{d_i}|^4dx=1\quad\mathrm{and}\quad M\leq\int_{\mathbb{R}}|(-\Delta)^{\frac{1}{4}}\tilde{w}_{d_i}|^2dx\leq\frac{1}{M}\quad\mathrm{as}\quad(d_1,d_2)\nearrow(a^*,a^*),
\end{equation}
where $M>0$ is independent of $d_1$ and $d_2$. For simplicity, in the remainder of this section we adopt $\epsilon>0$ to denote  $\epsilon(d_{1},d_{2})$ so that $\epsilon\to0$ as $(d_1,d_2)\nearrow(a^*,a^*)$.

\begin{lemma}\label{lemma 3.5}
	Under the assumptions stated in Proposition \ref{pro 3.1}, we have \\
	(i). There exists a sequence $\left \{ {y_{\epsilon}} \right \}$ and  positive constants $R_0$, $\eta_i$, such that the function
	\begin{equation}\label{3.35}
		w_{d_i}(x)=\tilde{w}_{d_i}(x+y_\epsilon)=\epsilon^{ \frac{1}{2}}u_{d_i}(\epsilon x+\epsilon y_\epsilon)
	\end{equation}
	satisfies
	\begin{equation}\label{3.36}
		\operatorname*{liminf}_{\epsilon\to0}\int_{B_{R_0}(0)}|w_{d_i}|^2dx\geq\eta_i>0,\quad where \quad i=1,2.
	\end{equation}\\
	(ii). For $\Lambda$ defined by (\ref{Lamda}), the following estimate is satisfied
	\begin{equation}\label{3.37}
		\lim_{(d_1,d_2)\nearrow(a^*,a^*)}\mathrm{dist}(\epsilon y_\epsilon,\Lambda)=0.
	\end{equation}
Furthermore, for any sequence $\{(d_{1k},d_{2k})\}$ with $(d_{1k},d_{2k})\nearrow(a^{*},a^{*})$ as $k\to\infty$,  there exists a
convergent subsequence
 of $\{(d_{1k},d_{2k})\}$, still denoted by $\{(d_{1k},d_{2k})\}$, such that, for $i=1,2$,
\begin{equation}\label{3.38}
	\epsilon_ky_{\epsilon_k}\xrightarrow{k} x_0\in\Lambda\quad \text{and}\quad w_{d_{ik}}\xrightarrow{k} w_0\quad\textit{strongly in H}^{\frac{1}{2}}(\mathbb{R}),
\end{equation}
where $w_0$ satisfies
\begin{equation}\label{3.39}
	w_0(x)=\frac{\lambda^{\frac{1}{2}}}{\|Q\|_2}Q(\lambda(x-y_0)) \quad \text{for some}\ y_0\in \mathbb{R} \ and\ \lambda>0.
\end{equation}
\begin{proof} \textbf{(i).}
	To obtain (\ref{3.36}), we know from (\ref{3.35}) that it suffices to prove that there exist positive constants $R_0$ and  $\eta_i$ such that
	\begin{equation}\label{3.40}
		\operatorname*{liminf}_{\epsilon\to0}\int_{B_{R_0}(y_\epsilon)}|\tilde{w}_{d_i}|^2dx\geq\eta_i>0,\quad i=1,2.
	\end{equation}
We initially prove that (\ref{3.40}) holds for $\tilde{w}_{d_1}$. To argue by contradiction, we assume this does not hold. Then, for any $R>0$, there exists a subsequence $\{\tilde{w}_{d_{1k}}\}$, where $(d_{1k},d_{2k})\nearrow(a^{*},a^{*})$ as $k\to\infty$, such that
\begin{equation*}
	\lim_{k\to\infty}\sup_{y\in\mathbb{R}}\int_{B_R(y)}|\tilde{w}_{d_{1k}}|^2dx=0.
\end{equation*}
It then follows from  the vanishing lemma  \cite[Lemma I.1]{vanish} that $\tilde{w}_{d_{1k}}\overset{k}{\operatorname*{\to}}0$  strongly in $L^{p}(\mathbb{R})$ with $p\in(2, \infty)$, thereby yielding a contradiction with (\ref{3.34}). Hence, (\ref{3.40}) holds for $\tilde{w}_{d_1}$ with respect to a sequence ${\{y_{\epsilon}\}}
$, and positive constants $R_0$,  $\eta_1$.

For the sequence ${\{y_{\epsilon}\}}
$ and positive constants $R_0$,  $\eta_1$ obtained above,  we now prove that
(\ref{3.40}) holds for $\tilde{w}_{d_2}$ with some positive constant $\eta_2$.  To argue by contradiction, we assume  this fails. Then,  there exists a subsequence $\{\tilde{w}_{d_{2k}}\}$, where $(d_{1k},d_{2k})\nearrow(a^{*},a^{*})$ as $k\to\infty$, such that
\begin{equation*}
	\operatorname*{lim\sup}_{k\to\infty}\int_{B_{R_0}(y_{\epsilon_k})}|\tilde{w}_{d_{2k}}|^2dx=0,
\end{equation*}
where $\epsilon_{k}:=\epsilon(d_{1k},d_{2k})>0$ is given by (\ref{3.31}). Since $\tilde{w}_{d_{i}}$ is uniformly bounded  in $H^{\frac{1}{2}}(\mathbb{R})\cap L^{r}(\mathbb{R})$ with $r\in [2, \infty)$, one can select $r>4$ and $\theta\in(0,1)$ so that $\frac{1}{4}=\frac{1-\theta}{r}+\frac{\theta}{2}$. Then, by the H\"{o}lder inequality, we derive that 
\begin{align*}\label{3.41}\int_{B_{R_{0}}(y_{\epsilon_{k}})}|\tilde{w}_{d_{1k}}|^{2}|\tilde{w}_{d_{2k}}|^{2}dx&\leq\left(\int_{B_{R_{0}}(y_{\epsilon_{k}})}|\tilde{w}_{d_{1k}}|^{4}dx\right)^{\frac{1}{2}}\left(\int_{B_{R_{0}}(y_{\epsilon_{k}})}|\tilde{w}_{d_{2k}}|^{4}dx\right)^{\frac{1}{2}}\notag\\&\leq C\left(\int_{B_{R_0}(y_{\epsilon_k})}|\tilde{w}_{d_{2k}}|^r dx\right)^{\frac{2(1-\theta)}{r}}\left(\int_{B_{R_0}(y_{\epsilon_k})}|\tilde{w}_{d_{2k}}|^2dx\right)^\theta\\&\leq C\left(\int_{B_{R_{0}}(y_{\epsilon_{k}})}|\tilde{w}_{d_{2k}}|^{2}dx\right)^{\theta}\to0\quad\mathrm{as}\quad k\to\infty.\notag \end{align*}
By using the estimate (\ref{3.40}) for 
$\tilde{w}_{d_{1k}}$, we employ the H\"{o}lder inequality again to deduce from the above that, for $k$ large, 
\begin{align*}\int_{B_{R_{0}}(y_{\epsilon_{k}})}\left(|\tilde{w}_{d_{1k}}|^{2}-|\tilde{w}_{d_{2k}}|^{2}\right)^{2}dx&\geq\frac{1}{2}\int_{B_{R_{0}}(y_{\epsilon_{k}})}|\tilde{w}_{d_{1k}}|^{4}dx\\&\geq\frac{1}{4 R_{0}}\left(\int_{B_{R_{0}}(y_{\epsilon_{k}})}|\tilde{w}_{d_{1k}}|^{2}dx\right)^{2}\geq\frac{\eta_{1}^{2}}{4R_{0}}.\end{align*}
This leads to a contradiction,  since Lemma \ref{lemma 5.1} yields that
\begin{equation*}
	\int_{\mathbb{R}}\left(|\tilde{w}_{d_{1k}}|^2-|\tilde{w}_{d_{2k}}|^2\right)^2dx=\epsilon_k\int_{\mathbb{R}}\left(|u_{d_{1k}}|^2-|u_{d_{2k}}|^2\right)^2dx\to0\quad\mathrm{as}\quad k\to\infty.
\end{equation*}
Consequently,  (\ref{3.40}) also holds for $\tilde{w}_{d_{2}}$ with some positive constant $\eta_2$, which thus completes the proof of part (i).

\textbf{(ii).} To begin with, we prove that (\ref{3.37}) holds true.  From (\ref{3.30}), we deduce that 
\begin{equation}\label{3.41*}
	\sum_{i=1}^2\int_{\mathbb{R}}V_i(x)u_{d_i}^2dx=\sum_{i=1}^2\int_{\mathbb{R}}V_i(\epsilon x+\epsilon y_\epsilon)w_{d_i}^2dx\to0 \quad as \quad (d_1,d_2)\nearrow(a^*,a^*). 
\end{equation}
Indeed, if (\ref{3.37}) is false, then there exists $\delta>0$ and a subsequence $\{(d_{1n},d_{2n})\}$ satisfying $(d_{1n},d_{2n})\nearrow(a^*,a^*)$ as $n\rightarrow\infty$, such that
\begin{equation*}
	\epsilon_n:=\epsilon(d_{1n},d_{2n})\to0\quad\mathrm{and}\quad\mathrm{dist}(\epsilon_ny_{\epsilon_n},\Lambda)\geq\delta>0\quad\mathrm{as}\quad n\to\infty.
\end{equation*}
This indicates   that there exists $C = C(\delta) > 0$ satisfying
\begin{equation*}
	\lim_{n\to\infty}\left(V_1(\epsilon_ny_{\epsilon_n})+V_2(\epsilon_ny_{\epsilon_n})\right)\geq C(\delta)>0.
\end{equation*}
It then follows from Fatou's Lemma and (\ref{3.36}) that
\begin{align*}&\operatorname*{lim}_{n\to\infty}\sum_{i=1}^{2}\int_{\mathbb{R}}V_{i}(\epsilon_{n}x+\epsilon_{n}y_{\epsilon_{n}})w_{d_{in}}^{2}dx\\&\geq\sum_{i=1}^{2}\int_{B_{R_{0}}(0)}\operatorname*{liminf}_{n\to\infty}V_{i}(\epsilon_{n}x+\epsilon_{n}y_{\epsilon_{n}})w_{d_{in}}^{2}dx\geq C(\delta)\operatorname*{min}\{\eta_{1},\eta_{2}\}.\end{align*}
This leads to a contradiction with (\ref{3.41*}),  and thus (\ref{3.37}) is proved.

Next, we proceed to prove (\ref{3.38}) and (\ref{3.39}). As a non-negative minimizer of  (\ref{new problem}), $(u_{d_1},u_{d_2})$ satisfies the following Euler-Lagrange system
\begin{equation}\label{3.43}
	\begin{cases}\sqrt{-\Delta} u_{d_1}+V_1(x)u_{d_1}=\mu_{d_1}u_{d_1}+d_1u_{d_1}^3-\beta(u_{d_1}^2-u_{d_2}^2)u_{d_1}&\mathrm{in}\quad\mathbb{R},\\\sqrt{-\Delta}u_{d_2}+V_2(x)u_{d_2}=\mu_{d_2}u_{d_2}+d_2u_{d_2}^3-\beta(u_{d_2}^2-u_{d_1}^2)u_{d_2}&\mathrm{in}\quad\mathbb{R}.\end{cases}
\end{equation}
Here $(\mu_{d_{1}},\mu_{d_{2}})$ denotes a suitable Lagrange multiplier satisfying
\begin{equation*}
	\mu_{d_i}=E_{d_i}^i(u_{d_i})-\frac{d_i}{2}\int_{\mathbb{R}}|u_{d_i}|^4dx-\beta\int_{\mathbb{R}}(-1)^i(u_{d_1}^2-u_{d_2}^2)u_{d_i}^2dx,\quad i=1,2.
\end{equation*}
We then deduce from  Lemma \ref{lemma 5.1}, (\ref{new energy}), (\ref{L4 2}) and (\ref{3.32}) that, for $i=1,2$,
\begin{equation}\label{3.44}
	\mu_{d_{i}}\sim-\frac{d_{i}}{2}\int_{\mathbb{R}}|u_{d_{i}}|^{4}dx\sim-\epsilon^{-1}\quad\mathrm{and}\quad\mu_{d_{1}}/\mu_{d_{2}}\to1\quad\mathrm{as}\quad(d_{1},d_{2})\nearrow(a^{*},a^{*}).
\end{equation}
From the definition of $w_{d_i}$ given in (\ref{3.35}), the functions satisfy the following system
\begin{equation}\label{3.45}
\begin{cases}\sqrt{-\Delta} w_{d_1}+\epsilon V_1(\epsilon x+\epsilon y_\epsilon)w_{d_1}=\epsilon\mu_{d_1}w_{d_1}+d_1w_{d_1}^3-\beta(w_{d_1}^2-w_{d_2}^2)w_{d_1}&\mathrm{in} \quad\mathbb{R},\\ \sqrt{-\Delta} w_{d_2}+\epsilon V_2(\epsilon x+\epsilon y_\epsilon)w_{d_2}=\epsilon \mu_{d_2}w_{d_2}+d_2w_{d_2}^3-\beta(w_{d_2}^2-w_{d_1}^2)w_{d_2}&\mathrm{in}\quad\mathbb{R},\end{cases}
\end{equation}
where the Lagrange multiplier $(\mu_{d_{1}},\mu_{d_{2}})$ satisfies (\ref{3.44}).

 For any sequence $\{(d_{1k}, d_{2k})\}$ where $(d_{1k}, d_{2k}) \nearrow (a^*, a^*)$ as $k \rightarrow \infty$, it can be inferred from (\ref{3.37}) and (\ref{3.44}) that there exists a subsequence of $\{(d_{1k}, d_{2k})\}$, which we still denote by $\{(d_{1k}, d_{2k})\}$, such that, for $i=1,2$,
 \begin{equation}\label{3.46}
 	\epsilon_{k}y_{\epsilon_{k}}\overset{k}{\operatorname*{\to}}x_{0}\in\Lambda,\quad\epsilon_{k}\mu_{d_{ik}}\overset{k}{\operatorname*{\to}}-\lambda<0\quad\mathrm{for~some}\quad\lambda>0,
 \end{equation}
 and
 \begin{equation*}
 	w_{d_{ik}}\overset{k}{\operatorname*{\rightharpoonup}}w_i\geq0\text{ weakly in }H^{\frac{1}{2}}(\mathbb{R})\text{ for some }w_i\in H^{\frac{1}{2}}(\mathbb{R}).
 \end{equation*}
By (\ref{3.5}), we have
\begin{equation}\label{3.47}
	\|w_{d_1}^2-w_{d_2}^2\|_2=\epsilon^{\frac12}\|u_{d_1}^2-u_{d_2}^2\|_2\to0\quad\mathrm{as}\quad(d_1,d_2)\nearrow(a^*,a^*),
\end{equation}
which implies that $w_1=w_2\geq0$ a.e. in $\mathbb{{R}}$. We hence denote $0\leq w_{0}:=w_{1}=w_{2}\in H^{\frac{1}{2}}(\mathbb{R})$.
By passing to the weak limit in (\ref{3.45}), it follows from (\ref{3.46}) and (\ref{3.47}) that  $w_0$ satisfies 
\begin{equation}\label{3.48}
	\sqrt{-\Delta} w_0(x)=-\lambda w_0(x)+a^*w_0^3(x)\quad\mathrm{in}\quad\mathbb{R}.
\end{equation}
Moreover, similarly to the proof of Proposition 3.1 in \cite{T26}, applying the strict positivity of the kernel $G_{1/2,\lambda}(x-y)$ of the resolvent $(\sqrt{-\Delta}+\lambda)^{-1}$ on $\mathbb{{R}}$, we deduce that $w_{0}\equiv0$ or $w_{0}>0$ for $x\in \mathbb{{R}}$. It is then  concluded from (\ref{3.36}) that $w_{0}>0$.

Next, we come to prove that $w_0(x)$ satisfies (\ref{3.39}). From (\ref{3.48}), we know that $w_0$ is a  positive solution of 
\begin{equation}\label{3.50}
		\sqrt{-\Delta} u=-\lambda u+a^*u^3\quad\mathrm{in}\quad\mathbb{R}.
\end{equation}
Then we conclude that
\begin{equation*}
	W(x):=\left(\frac{\lambda}{a^*}\right)^{-\frac{1}{2}}w_{0}(\lambda^{-1}x+y_0)=\lambda^{-\frac{1}{2}}||Q(x)||_2w_{0}(\lambda^{-1}x+y_0),\quad\mathrm{for~some}\quad y_0\in\mathbb{R},
\end{equation*}
is a positive solution of the classical equation (\ref{classical eq}).

We claim that $W(x)=Q(x)$, up to a translation and a dilation.  
Note that,  via  the  translation and  dilation 
\begin{equation*}
	\begin{cases}
		w(x):=\left( \frac{\lambda}{a^*} \right)^{\frac{1}{2}} u(\lambda (x-y_0)),\\
		u(x):=\left( \frac{\lambda}{a^*} \right)^{-\frac{1}{2}} w(\lambda^{-1}x+y_0),		
	\end{cases}
\end{equation*}
the following equations are equivalent, that is,
\begin{equation*}
	\sqrt{-\Delta} w = -\lambda w + a^*w^3 \iff \sqrt{-\Delta} u + u = u^3,
\end{equation*}
which  implies that the ground states of (\ref{3.50}) and (\ref{classical eq}) correspond to each other. Once we verify that $w_0$ is a ground state of (\ref{3.50}), the claim is thus proven by applying the uniqueness of the ground state of (\ref{classical eq}). We now prove that $w_0$ is a ground state of (\ref{3.50}) by contradiction.

Let 
\begin{equation*}\label{new single}
	\tilde{I}(u):=\int_{\mathbb{R}}\left(|(-\Delta)^{\frac{1}{4}}u|^2+\lambda^{}|u|^2\right)dx-\frac{a^*}{2}\int_{\mathbb{R}}|u|^{4}dx,
\end{equation*}
and
\begin{align*}\label{new two}
	\tilde{I}_{d_1,d_2}(u_1,u_2):=&\sum_{i=1}^{2}\int_{\mathbb{R}}\left(|(-\Delta)^{\frac{1}{4}}u_i|^2+\varepsilon V_i(\varepsilon x+\varepsilon y_{\varepsilon })u_i^2-\varepsilon \mu_{d_i}|u_i|^2-\frac{d_i}{2}|u_i|^4\right)dx\notag\\	&+\frac{\beta}{2}\int_{\mathbb{R}}(|u_1|^2-|u_2|^2)^2dx, 
\end{align*}
where the Lagrange multiplier $(\mu_{d_1},\mu_{d_2})$ satisfies (\ref{3.43}).

We now suppose that there exists a solution $0\neq w_1^*\in H^{\frac{1}{2}}(\mathbb{R})$ of (\ref{3.50}) such that
\begin{equation*}
	\tilde{I}(w_1^*)<\tilde{I}(w_{0}).
\end{equation*}
Since $w_1^*$ and $w_0$ are solutions of (\ref{3.50}), we deduce that
\begin{equation}\label{5.57}
	\frac{a^{*}}{2}\int_{\mathbb{R}}|w_{1}^*|^{4}dx=\tilde{I}(w_{1}^*)<\tilde{I}(w_{0})=\frac{a^{*}}{2}\int_{\mathbb{R}}|w_{0}|^{4}dx.
\end{equation}
Given that $E_{d_{1},d_{2}}(u_{d_{1}},u_{d_{2}})=e(d_1,d_2)$, it  easily follows that
\begin{equation*}
	\tilde{I}_{d_1,d_2}(w_{d_1},w_{d_2})=\inf\left\{\tilde{I}_{d_1,d_2}(u_1,u_2):(u_1,u_2)\in\mathcal{M}\right\}.
\end{equation*}
We then have
\begin{equation*}
	\tilde{I}_{d_1,d_2}\left(\frac{w_1^*}{||w_1^*||_2},\frac{w_1^*}{||w_1^*||_2}\right)\geq\tilde{I}_{d_1,d_2}(w_{d_1},w_{d_2})=\frac{d_1}{2}\int_{\mathbb{R}}|w_{d_1}|^{4}dx+\frac{d_2}{2}\int_{\mathbb{R}}|w_{d_2}|^{4}dx-\frac{\beta}{2}\int_{\mathbb{R}}(|w_{d_1}|^2-|w_{d_2}|^2)^2dx.
\end{equation*}
This,  combined with (\ref{5.57}) and Fatou's lemma, gives that
\begin{align*}\tilde{I}(w_{0})&=\frac{a^{*}}{2}\int_{\mathbb{R}}|w_0|^{4}dx\\&\leq\operatorname*{liminf}_{(d_1,d_2)\to (a^*,a^*)}\frac{1}{2}\bigg(\frac{d_1}{2}\int_{\mathbb{R}}|w_{d_1}|^{4}dx+\frac{d_2}{2}\int_{\mathbb{R}}|w_{d_2}|^{4}dx-\frac{\beta}{2}\int_{\mathbb{R}}(|w_{d_1}|^2-|w_{d_2}|^2)^2dx\bigg)\\&=\lim_{(d_1,d_2)\to (a^*,a^*)}\frac{1}{2}\tilde{I}_{d_1,d_2}(w_{d_1},w_{d_2})\\&\leq\operatorname*{liminf}_{(d_1,d_2)\to (a^*,a^*)}\frac{1}{2}\tilde{I}_{d_1,d_2}\left(\frac{w_1^*}{||w_1^*||_2},\frac{w_1^*}{||w_1^*||_2}\right)\\&=\frac{1}{||w_{1}^*||_{2}^{2}}\int_{\mathbb{R}}\left(|(-\Delta)^{\frac{1}{4}}w_{1}^*|^{2}+\lambda |w_{1}^{*}|^2\right)dx-a^{*}\frac{1}{2\|w_{1}^*\|_{2}^{4}}\int_{\mathbb{R}}|w_{1}^*|^{4}dx\\&=\frac{a^{*}}{2}\int_{\mathbb{R}}|w_{1}^*|^{4}dx\left(\frac{2}{||w_{1}^*||_{2}^{2}}-\frac{1}{||w_{1}^*||_{2}^{4}}\right)\\&\leq\frac{a^{*}}{2}\int_{\mathbb{R}}|w_{1}^*|^{4}dx\\&=\tilde{I}(w_{1}^*).\end{align*}
This leads to a contradiction with (\ref{5.57}), and thus  $w_0$ is the unique positive least energy solution of  (\ref{3.50}),  from which we can deduce that $W(x)=Q(x)$ up to a translation and a dilation. Since $w_0>0$, one can deduce from the uniqueness (up to translations) of ground  states for the equation (\ref{classical eq}) that   $w_0$ is of the form (\ref{3.39}), which thus completes the proof of (\ref{3.39}).\\
It is easy to see that $||w_0||_2^2=1$, which,  together with the norm preservation implies that
\begin{equation*}
	w_{d_{ik}}\overset{k}{\operatorname*{\to}}w_0\quad\text{strongly in } L^2(\mathbb{R}).
\end{equation*}
In addition, this strong convergence holds also for all $p\geq2$, due to the boundedness of $\{w_{d_{ik}}\}$ in $H^{\frac{1}{2}}(\mathbb{{R}})$. Since  $w_{d_{ik}}$ and $w_0$ satisfy (\ref{3.45}) and (\ref{3.48}) respectively,   a simple analysis  yields that
\begin{equation*}
	w_{d_{ik}}\overset{k}{\operatorname*{\to}}w_0\quad\text{strongly in }H^{\frac12}(\mathbb{R}),\quad i=1,2.
\end{equation*}
Hence, (\ref{3.38}) and (\ref{3.39})  are proven.

Finally, we prove Proposition \ref{pro 3.1} concerning  the optimal estimates for $e(d_1,d_2)$ by means of the above lemmas.

\end{proof}

\begin{proof}[Proof of Proposition \ref{pro 3.1}]
	For any sequence $\{ ( d_{1k}, d_{2k}) \}$ with   $(d_{1k},d_{2k})\nearrow(a^*, a^*)$ as $k\to\infty$, from Lemma \ref{lemma 3.5} $(ii)$, we deduce that there exists a convergent subsequence, still denoted by  $\{(d_{1k},d_{2k})\}$, such that (\ref{3.38}) holds and 
	$\epsilon_ky_{\epsilon_k}\overset{k}{\operatorname*{\to}}x_0\in\Lambda$. Without loss of generality, we suppose that $x_{0}=x_{1j_{0}}$ for some $1\leq j_{0}\leq l$. Firstly, we show that
	\begin{equation}\label{cliam}
		\operatorname*{lim\sup}_{k\to\infty}\frac{|\epsilon_ky_{\epsilon_k}-x_{1j_0}|}{\epsilon_k}<\infty.
	\end{equation}
	In fact, it follows  from (\ref{3.38}) and (\ref{3.39}) that, for some $R_0>0$,
\begin{align}\label{3.59}\notag e(d_{1k},d_{2k})&=E_{d_{1k},d_{2k}}(u_{d_{1k}},u_{d_{2k}})\notag\\&\geq\sum_{i=1}^{2}\Big\{\frac{1}{\epsilon_{k}}\Big[\int_{\mathbb{R}}|(-\Delta ) ^\frac{1}{4} w_{d_{ik}}(x)|^{2}dx-\frac{a^{*}}{2}\int_{\mathbb{R}}|w_{d_{ik}}(x)|^{4}dx\Big]\notag\\&+\frac{a^{*}-d_{ik}}{2\epsilon_{k}}\int_{\mathbb{R}}|w_{d_{ik}}(x)|^{4}dx+\int_{\mathbb{R}}V_{i}(\epsilon_{k}x+\epsilon_{k}y_{\epsilon_{k}})w_{d_{ik}}^{2}dx\Big\}\\&\geq\sum_{i=1}^{2}\Big\{\frac{a^{*}-d_{ik}}{4\epsilon_{k}}\int_{\mathbb{R}}|w_{0}(x)|^{4}dx+\int_{B_{R_{0}}(0)}V_{i}(\epsilon_{k}x+\epsilon_{k}y_{\epsilon_{k}})w_{d_{ik}}^{2}dx\Big\}\notag\\&\geq C_{1}\frac{2a^{*}-d_{1k}-d_{2k}}{\epsilon_{k}}+C_{2}\sum_{i=1}^{2}\epsilon_{k}^{p_{ij_{0}}}\int_{B_{R_{0}}(0)}\left|x+\frac{\epsilon_{k}y_{\epsilon_{k}}-x_{1j_{0}}}{\epsilon_{k}}\right|^{p_{ij_{0}}}w_{d_{ik}}^{2}dx\textcolor{red}{.}\notag\end{align}
We now assume that there exists  a subsequence satisfying $\frac{|\epsilon_ky_{\epsilon_k}-x_{1j_0}|}{\epsilon_k}\to\infty\mathrm{~as~}k\to\infty$. Applying  Fatou's Lemma, it then follows from $(\ref{3.36})$ and $(\ref{3.59})$ that for any $m>0$,
\begin{align}\label{3.60}e(d_{1k},d_{2k})&\geq C_{1}\frac{2a^{*}-d_{1k}-d_{2k}}{\epsilon_{k}}+C_{2}m\epsilon_{k}^{\bar{p}_{j_{0}}}\geq Cm^{\frac{1}{\bar{p}_{j_{0}}+1}}\left(a^{*}-\frac{d_{1k}+d_{2k}}{2}\right)^{\frac{\bar{p}_{j_{0}}}{\bar{p}_{j_{0}}+1}}\notag\\&\geq Cm^{\frac{1}{\bar{p}_{j_0}+1}}\left(a^*-\frac{d_{1k}+d_{2k}}{2}\right)^{\frac{p_0}{p_0+1}},\end{align}
where $p_{0}=\operatorname*{max}_{1\leq j\leq l}\bar{p}_{j}$ and $\bar{p}_{j_{0}}=\operatorname*{min}\{p_{1j_{0}},p_{2j_{0}}\}>0$ are defined in (\ref{index}). Note that the estimate (\ref{3.60}) contradicts  the upper bound of (\ref{3.5}), and (\ref{cliam})  is thus proven.

	It can be deduced from (\ref{cliam}) that there exists a subsequence of $\{\epsilon_k\}$, still denoted by $\{\epsilon_k\}$, such that as $k \to \infty$, 
	\begin{equation*}
		\frac{\epsilon_ky_{\epsilon_k} - x_{1j_0}}{\epsilon_k} \to \bar{y}_0 \quad \text{holds for  some } \bar{y}_0\in\mathbb{{R}}.
	\end{equation*}
It then follows from (\ref{3.59}) that there exists  $C_1>0$, independent of $d_{1k}$ and $d_{2k}$, such that
\begin{equation*}
	e(d_{1k},d_{2k})\geq C_1\left(a^*-\frac{d_{1k}+d_{2k}}{2}\right)^{\frac{\bar{p}_{j_0}}{\bar{p}_{j_0}+1}}\quad\mathrm{as}\quad(d_{1k},d_{2k})\nearrow(a^*,a^*).
\end{equation*}
Based on  $\bar{p}_{j_0}\leq p_0$ and the upper bound of (\ref{3.5}), the above estimate implies that $\bar{p}_{j_{0}}=p_{0}$ and (\ref{3.3}) holds for the above subsequence $\{(d_{1k},d_{2k})\}$.

We now turn to proving that (\ref{3.4}) is satisfied  for the above subsequence $\{(d_{1k},d_{2k})\}$. We assume that 
\begin{equation*}
	\mathrm{either}\quad\epsilon_k>>\left(a^*-\frac{d_{1k}+d_{2k}}{2}\right)^{\frac{1}{p_0+1}}\mathrm{\quad or \quad}0<\epsilon_k<<\left(a^*-\frac{d_{1k}+d_{2k}}{2}\right)^{\frac{1}{p_0+1}}\mathrm{\quad as \quad }k\to\infty,
\end{equation*}
which, together with (\ref{3.59}),  implies that $e(d_{1k},d_{2k})>>\left(a^{*}-\frac{d_{1k}+d_{2k}}{2}\right)^{\frac{p_{0}}{p_{0}+1}}$ as $k\to\infty$. This leads to a contradiction with (\ref{3.3}),  and thus (\ref{3.4}) is proved.

As shown above, Proposition \ref{pro 3.1} holds for any given subsequence  $\{(d_{1k},d_{2k})\}$ satisfying $(d_{1k},d_{2k}) \nearrow (a^*,a^*)$. It thus   essentially holds for the whole sequence $\{(d_{1},d_{2})\}$ with $(d_{1},d_{2}) \nearrow (a^*,a^*)$.
\section{Proof of Theorem \ref{blow}.}\label{section new}
This section concentrates on  proving Theorem \ref{blow} related to the mass concentration of minimizers. Given the notations in (\ref{new energy}) and (\ref{new problem}), proving Theorem \ref{blow}   only necessitates  establishing the following theorem concerning the minimizers of (\ref{new problem}) as $(d_1,d_2)\nearrow(a^*,a^*)$. 
\begin{theorem}\label{Theorem 4.1}
		Let $0< \beta < a^*$, and  suppose that for $i=1,2$,    $p_{ij}\in(0,1)$  for  $1\leq j\leq n_{i}$   and  $V_i( x)$ satisfies (\ref{form 1}) - (\ref{point}). Let $( u_{d_{1k}}, u_{d_{2k}})$  be a non-negative minimizer of (\ref{new problem}) as  $(d_{1k}, d_{2k})\nearrow ( a^* , a^* )$. Then for any  sequence $\{(d_{1k}, d_{2k})\}$ satisfying  $(d_{1k}, d_{2k})\nearrow ( a^* , a^*)$ as $k\to\infty$, there exists a subsequence of  $\{(d_{1k}, d_{2k})\}$,  still denoted by $\{(d_{1k}, d_{2k})\}$, such that for
	$i = 1, 2$, each $u_{d_{ik}}$ has at least one  global maximum point $x_{ik}$ satisfying   
	\begin{equation}\label{4.1}
		x_{ik}\xrightarrow{k} \bar{x}_0\in\mathcal{Z} \quad and \quad
		\lim_{k\to\infty}\frac{|x_{ik}-\bar{x}_0|}{(a^*-\frac{d_{1k}+d_{2k}}{2})^{\frac{1}{p_0+1}}}=0.
	\end{equation}
	In addition, for $i=1,2$,
	\begin{equation*}
		\lim_{k\to\infty}\left(a^*-\frac{d_{1k}+d_{2k}}{2}\right)^{\frac{1}{2(p_0+1)}}u_{d_{ik}}\left(\left(a^*-\frac{d_{1k}+d_{2k}}{2}\right)^{\frac{1}{p_0+1}}x+x_{ik}\right)=\frac{\lambda^{\frac{1}{2}}}{\|Q\|_2}Q(\lambda x)
	\end{equation*}
	strongly in $H^{\frac{1}{2}}(\mathbb{{R}})$. Here  $\lambda$ is given by 
	\begin{equation*}
		\lambda=\left(\frac{p_0\gamma}{2}\int_{\mathbb{R}}|x|^{p_0}Q^2(x)dx\right)^{\frac{1}{p_0+1}},
	\end{equation*}
	where $p_0>0$ and $\gamma>0$ are defined in (\ref{index}) and (\ref{flattest}), respectively.
\end{theorem}

Let $(u_{d_1}, u_{d_2}) $ be a non-negative minimizer of (\ref{new problem}) with $(d_1, d_2) \nearrow (a^*, a^*)$ and denote 
\begin{equation}\label{4.3}
	\varepsilon:=(a^{*}-\frac{d_{1}+d_{2}}{2})^{\frac{1}{p_{0}+1}}>0.
\end{equation}
We then deduce from (\ref{3.4}) and (\ref{3.5}) that, for $i=1,2$,
\begin{equation*}\label{4.4}
	\sum_{i=1}^2\int_{\mathbb{R}}V_i(x)u_{d_i}^2dx\leq e(d_1,d_2)<C\left(a^*-\frac{d_1+d_2}{2}\right)^{\frac{p_0}{p_0+1}}
\end{equation*}
and 
\begin{equation}\label{4.5}
	\int_{\mathbb{R}}|(-\Delta ) ^\frac{1}{4} u_{d_i}(x)|^2dx\sim\varepsilon^{-1},\quad\int_{\mathbb{R}}|u_{d_i}(x)|^4dx\sim\varepsilon^{-1}, \quad \text{as}\quad (d_1,d_2)\nearrow(a^*,a^*). 
\end{equation}
Similar to (\ref{3.36}), it is known that there exists a sequence $\{y_{\varepsilon}\}$, together with positive constants $R_0$ and $ \eta_i$ such that 
\begin{equation}\label{4.6}
	\operatorname*{liminf}_{\varepsilon\searrow0}\int_{B_{R_0}(0)}|w_{d_i}|^2dx\geq\eta_i>0, \quad i=1,2,
\end{equation}
where $w_{d_i}$ is  the $L^2(\mathbb{R})$-normalized function given by 
\begin{equation}\label{4.7}
	w_{d_i}(x)=\varepsilon^{\frac{1}{2}} u_{d_i}(\varepsilon x+\varepsilon y_\varepsilon),\quad i=1,2.
\end{equation}
From (\ref{4.5}), we conclude that there exists a positive constant $M$,  independent of $d_1$ and $d_2$, such that
\begin{equation*}
	M\leq\int_{\mathbb{R}}|(-\Delta ) ^\frac{1}{4} w_{d_i}|^2dx\leq\frac{1}{M},\quad M\leq\int_{\mathbb{R}}|w_{d_i}|^4dx\leq\frac{1}{M}, \quad i=1,2.
\end{equation*}
\begin{proof}[Proof of Theorem \ref{Theorem 4.1}]
Define $\varepsilon _k: = \left ( a^* - \frac {d_{1k}+ d_{2k}}2\right ) ^{\frac 1{p_0+ 1}}> 0$ with $(d_{1k},d_{2k})\nearrow$ $(a^*,a^*)$ as $k\to\infty$, and let $(u_{1k}(x),u_{2k}(x)):=(u_{d_{1k}}(x),u_{d_{2k}}(x))$  be a non-negative
minimizer of (\ref{new problem}). Inspired by \cite{T18}, the proof of Theorem \ref{Theorem 4.1} is divided into  the following three steps.\\

\noindent\textbf{Step 1. Decay for estimate for $(u_{1k}(x), u_{2k}(x))$.} Let  $w_{ik}(x):=w_{d_{ik}}(x)\geq 0$ be defined by  (\ref{4.7}). By analogy with the proof of Lemma \ref{lemma 3.5} $(ii)$,  there exists a subsequence of \(\{\varepsilon_k\}\), which we still denote by \(\{\varepsilon_k\}\), such that
\begin{equation}\label{4.9}
	z_k:=\varepsilon_ky_{\varepsilon_k}\overset{k}{\operatorname*{\to}}x_0\quad\text{for some }x_0\in\Lambda,
\end{equation}
where $\Lambda$ is given by (\ref{Lamda}) and for $i=1,2$, $w_{ik}$ satisfies
\begin{equation}\label{4.10}
	\begin{cases}\sqrt{-\Delta} w_{{1k}}+\varepsilon_{k}V_{1}(\varepsilon_{k}x+z_{k})w_{1k}=\varepsilon_{k}\mu_{1k}w_{1k}+d_{1k}w_{1k}^{3}-\beta(w_{1k}^{2}-w_{2k}^{2})w_{1k}\quad\mathrm{in}\quad\mathbb{R},\\\sqrt{-\Delta} w_{{2k}} +\varepsilon_{k}V_{2}(\varepsilon_{k}x+z_{k})w_{2k}=\varepsilon_{k}\mu_{2k}w_{2k}+d_{2k}w_{2k}^{3}-\beta(w_{2k}^{2}-w_{1k}^{2})w_{2k}\quad\mathrm{in}\quad\mathbb{R}.\end{cases}
\end{equation}
Here $(\mu_{1k},\mu_{2k})$ denotes a  suitable  Lagrange multiplier satisfying
\begin{equation*}
	\mu_{ik}\sim-\varepsilon_{k}^{-1}\quad\mathrm{and}\quad\mu_{1k}/\mu_{2k}\to1\quad\mathrm{as}\quad(d_{1k},d_{2k})\nearrow(a^{*},a^{*}),\quad i=1,2.
\end{equation*}
Furthermore, $w_{ik}\xrightarrow{k}w_0$ strongly in $H^{\frac{1}{2}}(\mathbb{R})$ for some $w_0>0$, where $w_0$ satisfies 
\begin{equation*}
	\sqrt{-\Delta} w_0(x)=-\lambda w_0(x)+a^*w_0^3(x)\quad\mathrm{in}\quad\mathbb{R},
\end{equation*}
with a positive constant $\lambda$. Analogously to the proof of (\ref{3.39}), we have
\begin{equation}\label{form2}
	w_0(x)=\frac{\lambda^{\frac{1}{2}}}{\|Q\|_2}Q(\lambda(x-y_0))\quad\mathrm{for~some}\quad y_0\in\mathbb{R}.
\end{equation}
Using the decay rate of $Q$  (\ref{decay}), it can be  derived that for $i=1,2$
\begin{equation}\label{4.13}
	\int_{|x|\geq R}|w_{ik}|^2 dx\to0\quad\text{as}\quad R\to \infty\quad \text{uniformly for large } k.
\end{equation}
From (\ref{4.10}), we deduce that, for $i=1,2$,
\begin{equation}\label{weak1}
	\sqrt{-\Delta} w_{ik}(x)\leq c_{i}(x)w_{ik}(x)\quad \text{in} \quad \mathbb{{R}},
\end{equation}
where $c_i(x)=d_{ik}w_{ik}^{2}+(-1)^i\beta(w_{1k}^2-w_{2k}^2)$ in $\mathbb{{R}}$.  Applying the non-local De Giorgi-Nash-Moser theory (see  Theorem 1.1 in \cite{T16}  or Theorem 5.4 in \cite{T17}  ) to  globally non-negative weak subsolution $w_{ik}$ $(i=1,2)$ of (\ref{weak1}),  it follows that, for $i=1,2$,
\begin{equation}\label{4.15}
	\sup_{B_{1}(\xi_0)}w_{ik}\leq C\left(\int_{B_{2}(\xi_0)}|w_{ik}|^{2}dx\right)^{\frac{1}{2}},
\end{equation}
where  $C$ is a positive constant and $\xi_0$ is an arbitrary point in $\mathbb{R}$. Hence, we deduce  from (\ref{4.13}) and (\ref{4.15}) that, for $i=1,2$,
\begin{equation}\label{4.16}
	w_{ik}(x)\to0\quad\mathrm{as}\quad|x|\to\infty\quad\text{uniformly in } k.
\end{equation}
Hence, we conclude that for each $i=1,2$,  $w_{ik}(x)$ has at least one global
 maximum point.

\noindent \textbf{Step 2. The detailed concentration behavior.} For the convergent subsequence $\{w_{ik}(x)\}$ obtained in Step 1,  we let  $\hat{z}_{ik}$ be any global maximum point of $u_{ik}(x)$, where $i=1,2$. It then follows  from (\ref{4.7}) that, for $i=1,2$, $w_{ik}$ attains its global maximum at the point $\frac{\hat{z}_{ik}-z_{k}}{\varepsilon_{k}}$, where $z_k$ is given by (\ref{4.9}).  Combining (\ref{4.6}) and (\ref{4.16}), we thus deduce that, for $i=1,2$,
\begin{equation}\label{4.17}
	\operatorname*{limsup}_{k\to\infty}\frac{|\hat{z}_{ik}-z_k|}{\varepsilon_k}<\infty.
\end{equation}
We now assert that there is a sequence $\{k\}$, passing to a subsequence if necessary,  such that
\begin{equation}\label{4.18}
	\lim_{k\to\infty}\frac{|\hat{z}_{1k}-\hat{z}_{2k}|}{\varepsilon_{k}}=0.
\end{equation}
From (\ref{4.17}), we know that there exists a sequence $\{k\}$ satisfying 
\begin{equation*}
	\lim_{k\to\infty}\frac{\hat{z}_{ik}-z_k}{\varepsilon_k}=\sigma_i\quad\mathrm{for~some~}\sigma_i\in\mathbb{R},\quad i=1,2.
\end{equation*}

\noindent Let
\begin{equation}\label{4.19}
	\bar{w}_{ik}(x):=w_{ik}\left(x+\frac{\hat{z}_{ik}-z_{k}}{\varepsilon_{k}}\right)=\varepsilon_{k}^{\frac{1}{2}}u_{ik}\left(\varepsilon_{k}x+\hat{z}_{ik}\right),\quad i=1,2.
\end{equation}
According to  Step 1, where we showed that $w_{ik}\xrightarrow{k}w_0$ strongly in $ H^{\frac{1}{2}}(\mathbb{R})$ with $w_0>0$ satisfying (\ref{form2}), we have
\begin{equation*}
	\operatorname*{lim}_{k\to\infty}\bar{w}_{ik}(x)=w_{0}(x+\sigma_i)=\frac{\lambda^{\frac{1}{2}}}{\|Q\|_{2}}Q(\lambda(x+\sigma_i-y_{0}))\quad\text{strongly in }H^{\frac{1}{2}}(\mathbb{R}),\quad i=1,2.
\end{equation*}
Noting from $(\ref{4.19})$ that the origin is a critical point of $\bar{w}_{ik}$ for all $k>0$ $(i=1,2)$, we find that it is also a critical point of $w_{0}(x+\sigma_i)$. Moreover, $z_0$ is the unique critical (maximum) point of $Q(\lambda(x-z_0))$. Therefore, we deduce that $w_{0}(x+\sigma_i)=\frac{\lambda^{\frac{1}{2}}}{\|Q\|_2}Q(\lambda(x+\sigma_i-y_0))$  is spherically symmetric about the origin, which implies that $\sigma_i=y_{0}$ and 
\begin{equation}\label{4.20}
	\lim_{k\to\infty}\bar{w}_{ik}(x)=\frac{\lambda^{\frac{1}{2}}}{\|Q\|_2}Q(\lambda x):=\bar{w}_0\quad\text{strongly in }H^{\frac{1}{2}}(\mathbb{R}),\quad i=1,2.
\end{equation}
Consequently, (\ref{4.19}) and (\ref{4.20})  imply  that (\ref{4.18}). \\

\noindent\textbf{Step 3. Completion of the proof.} For $1\leq j\leq l$, denote 
\begin{equation*}
\gamma_{j}(x)=\frac{V_{1}(x)+V_{2}(x)}{|x-x_{1j}|^{\bar{p}_{j}}},
\end{equation*}
where $\bar{p}_{j}>0$ is given by (\ref{index}). It then follows from the assumptions on $V_1(x)$ and $V_2(x)$ that the limit $\lim_{x\to x_{1j}}\gamma_{j}(x)=\gamma_{j}(x_{1j})$ exists for all $j= 1,\cdots,l$. Furthermore, for $\bar{\Lambda}$ and $\gamma_{j}$ given by (\ref{1.17}) and  (\ref{gamma})  respectively, we know that $\gamma_{j}(x_{1j})=\gamma_{j}\geq\gamma$ if $x_{1j}\in\bar{\Lambda}$. 
It thus follows from (\ref{4.19}) that  
	\begin{align}\label{4.22}e(d_{1k},d_{2k})&=E_{d_{1k},d_{2k}}(u_{d_{1k}},u_{d_{2k}})\notag\\&\geq\sum_{i=1}^{2}\Big\{\frac{1}{\varepsilon_{k}}\Big[\int_{\mathbb{R}}|(-\Delta ) ^\frac{1}{4}\bar{w}_{ik}(x)|^{2}dx-\frac{a^{*}}{2}\int_{\mathbb{R}}|\bar{w}_{ik}(x)|^{4}dx\Big]\notag\\&+ \frac{a^{*}-d_{ik}}{2\varepsilon_{k}}\int_{\mathbb{R}}|\bar{w}_{ik}(x)|^{4}dx+\int_{\mathbb{R}}V_{i}(\varepsilon_{k}x+\hat{z}_{ik})\bar{w}_{ik}^{2}dx\Big\}\\&\geq\sum_{i=1}^{2}\left[\frac{a^{*}-d_{ik}}{2\varepsilon_{k}}\int_{\mathbb{R}}|\bar{w}_{ik}(x)|^{4}dx+\int_{\mathbb{R}}V_{i}(\varepsilon_{k}x+\hat{z}_{ik})\bar{w}_{ik}^{2}dx\right].\notag\end{align}
  Here $\hat{z}_{ik}$ is the   global maximum point of $u_{ik}$, with $\hat{z}_{ik}\overset{k}{\operatorname*{\to}}x_{0}\in\Lambda$ for  $i=1,2$. We may suppose that $x_{0}=x_{1j_{0}}$ for some $1\leq j_{0}\leq l$.
  
  We now claim  that, for $i=1,2$,
  \begin{equation}\label{4.23}
  	\frac{|\hat{z}_{ik}-x_{1j_0}|}{\varepsilon_k}\quad\text{is uniformly bounded as }k\to\infty.
  \end{equation}
  By contradiction, if (\ref{4.23}) is false for $i=1$ or $i=2$, we then deduce from (\ref{4.18}) that both are unbounded, which implies that there exists a subsequence of $\{(d_{1k},d_{2k})\}$, still denoted by $\{(d_{1k},d_{2k})\}$, such that
  \begin{equation*}
  	\lim_{k\to\infty}\frac{|\hat{z}_{ik}-x_{1j_0}|}{\varepsilon_k}=\infty,\quad i=1,2.
  \end{equation*}
  It then follows from  Fatou's Lemma that for any  sufficiently large $C > 0$, 
  \begin{align*}&\liminf_{k\to\infty}\varepsilon_{k}^{-\bar{p}_{j_{0}}}\sum_{i=1}^{2}\int_{\mathbb{R}}V_{i}(\varepsilon_{k}x+\hat{z}_{ik})|\bar{w}_{ik}(x)|^{2}dx\notag\\&=\liminf_{k\to\infty}\sum_{i=1}^{2}\int_{\mathbb{R}}\frac{V_{i}(\varepsilon_{k}x+\hat{z}_{ik})}{|\varepsilon_{k}x+\hat{z}_{ik}-x_{1j_{0}}|^{\bar{p}_{j_{0}}}}\Big|x+\frac{\hat{z}_{ik}-x_{1j_{0}}}{\varepsilon_{k}}\Big|^{\bar{p}_{j_{0}}}|\bar{w}_{ik}(x)|^{2}dx\\ &\geq\sum_{i=1}^{2}\int_{\mathbb{R}}\liminf_{k\to\infty}\left(\frac{V_{i}(\varepsilon_{k}x+\hat{z}_{ik})}{|\varepsilon_{k}x+\hat{z}_{ik}-x_{1j_{0}}|^{\bar{p}_{j_{0}}}}\Big|x+\frac{\hat{z}_{ik}-x_{1j_{0}}}{\varepsilon_{k}}\Big|^{\bar{p}_{j_{0}}}|\bar{w}_{ik}(x)|^{2}\right)dx\geq C,\notag\end{align*}
which, together with (\ref{4.22}), implies that
\begin{equation}\label{4.24}
	e(d_{1k},d_{2k})\geq C\varepsilon_k^{\bar{p}_{j_0}}=C\left(a^*-\frac{d_{1k}+d_{2k}}{2}\right)^{\frac{\bar{p}_{j_0}}{p_0+1}}
\end{equation}
holds for an arbitrary constant $C>0$. This leads to a contradiction with Proposition \ref{pro 3.1} since $\bar{p}_{j_0}\leq p_0$, which completes the proof of (\ref{4.23}). 

Next, we prove that $\bar{p}_{j_{0}}=p_{0}$, that is,  $x_{1j_{0}}\in\bar{\Lambda}$, where $\bar{\Lambda}$ is given by  (\ref{1.17}).  It is known from (\ref{4.23}) that there exists a subsequence of $\{(d_{1k},d_{2k})\}$ such that, for $i=1,2$,
\begin{equation}\label{4.25}
	\lim_{k\to\infty}\frac{\hat{z}_{ik}-x_{1j_0}}{\varepsilon_k}=\hat{z}_0\quad\mathrm{for~some~}\hat{z}_0\in\mathbb{R}.
\end{equation}
Noticing that $Q$ is radially decreasing and decays as $|x|\to\infty$, we then conclude from (\ref{4.20}) that  
\begin{align}\label{4.26*}&\liminf_{k\to\infty}\varepsilon_{k}^{-\bar{p}_{j_{0}}}\sum_{i=1}^{2}\int_{\mathbb{R}}V_{i}(\varepsilon_{k}x+\hat{z}_{ik})|\bar{w}_{ik}(x)|^{2}dx\notag\\=&\liminf_{k\to\infty}\sum_{i=1}^{2}\int_{\mathbb{R}}\frac{V_{i}(\varepsilon_{k}x+\hat{z}_{ik})}{|\varepsilon_{k}x+\hat{z}_{ik}-x_{1j_{0}}|^{\bar{p}_{j_{0}}}}\Big|x+\frac{\hat{z}_{ik}-x_{1j_{0}}}{\varepsilon_{k}}\Big|^{\bar{p}_{j_{0}}}|\bar{w}_{ik}(x)|^{2}dx\\ \geq&\gamma_{j_{0}}(x_{1j_{0}})\int_{\mathbb{R}}|x+\hat{z}_{0}|^{\bar{p}_{j_{0}}}\bar{w}_{0}^{2}dx\geq\frac{\gamma_{j_{0}}(x_{1j_{0}})}{\lambda^{\bar{p}_{j_{0}}}\|Q\|_{2}^{2}}\int_{\mathbb{R}}|x|^{\bar{p}_{j_{0}}}Q^{2}dx,\notag\end{align}
where equality is attained if and only if $\hat{z}_{0}=0$.  Therefore, $\bar{p}_{j_{0}}=p_{0}$ follows from (\ref{4.22}) and (\ref{4.26*}). If not, (\ref{4.24}) holds with $C$ replaced by some $\hat{C}>0$, which contradicts Proposition \ref{pro 3.1}.

Now, according to the fact that $\bar{p}_{j_0}=p_0$, we conclude that $x_{1j_0}\in\bar{\Lambda}$ and $\gamma_{j_0}(x_{1j_0})=\gamma_{j_0}$. Then, by (\ref{eq1.11}), (\ref{4.3}),  (\ref{4.22}) and  (\ref{4.26*}) , direct calculations yield
 that
\begin{align}\label{4.27}\liminf_{k\to\infty}\frac{e(d_{1k},d_{2k})}{\varepsilon_{k}^{p_{0}}}&\geq\|\bar{w}_{0}\|_{4}^{4}+\gamma_{j_{0}}\int_{\mathbb{R}}|x+\hat{z}_{0}|^{p_{0}}\bar{w}_{0}^{2}dx\notag\\&\geq\frac{1}{a^{*}}\left(2\lambda+\frac{\gamma_{j_{0}}}{\lambda^{p_{0}}}\int_{\mathbb{R}}|x|^{p_{0}}Q^{2}dx\right),\end{align}
and equality  holds  in the last inequality   if and only if $\hat{z}_0=0$. By taking the infimum  of (\ref{4.27}) over $\lambda>0$, we further obtain that
\begin{equation}\label{4.28}
	\liminf_{k\to\infty}\frac{e(d_{1k},d_{2k})}{\varepsilon_k^{p_0}}\geq\frac{2p_0+2}{p_0a^*}\left(\frac{p_0\gamma_{j_0}\int_{\mathbb{R}}|x|^{p_0}Q^2dx}{2}\right)^{\frac{1}{p_0+1}},
\end{equation}
where the equality is attained at
\begin{equation*}
	\lambda=\lambda_0:=\left(\frac{p_0\gamma_{j_0}\int_{\mathbb{R}}|x|^{p_0}Q^2dx}{2}\right)^{\frac{1}{p_0+1}}.
\end{equation*}

  Finally,  we prove that the limit of (\ref{4.28}) exists and equals the right-hand side of (\ref{4.28}). To show this, we  define
  \begin{equation*}
  	\phi_1(x)=\phi_2(x)=\frac{t^{\frac{1}{2}}}{\varepsilon_k^{\frac{1}{2}}\|Q\|_2}Q\left(\frac{t(x-x_{1j_1})}{\varepsilon_k}\right)\quad\mathrm{where}\quad x_{1j_1}\in\mathcal{Z}.
  \end{equation*}
Then, taking $(\phi_1(x),\phi_2(x)) $ as a trial function for $E_{d_{1k},d_{2k}}(\cdot,\cdot)$ and minimizing it over $t>0$, we deduce that
\begin{equation*}
	\liminf_{k\to\infty}\frac{e(d_{1k},d_{2k})}{\varepsilon_k^{p_0}}\leq\frac{2p_0+2}{p_0a^*}\left(\frac{p_0\gamma\int_{\mathbb{R}}|x|^{p_0}Q^2dx}{2}\right)^{\frac{1}{p_0+1}},
\end{equation*}
which, together with (\ref{4.28}), implies that $\gamma=\gamma_{j_0}$, i.e., $x_{1j_0}\in\mathcal{Z}$, and (\ref{4.28}) is actually an equality since $\gamma=\operatorname*{min}\left\{\gamma_{1},\textcolor{red}{\cdots},\gamma_{l}\right\}$. This indicates that $\lambda$ is unique, which is  independent of the chosen subsequence  and minimizes (\ref{4.27}), that is, $\lambda=\lambda_{0}$. Furthermore, if (\ref{4.28}) is an equality, then (\ref{4.27}) is also an equality. Therefore, by using $\hat{z}_{0}=0$ and (\ref{4.25}), we establish (\ref{4.1}).
\end{proof}
\end{proof}
\end{lemma}
\end{proof}
\end{proof}
\end{lemma}

\section*{Competing interests}
The authors declare that they have no conflicts of interest.

\section*{Authors contribution}
Each of the authors contributed to each part of this study equally, all authors read and approved the final manuscript.

\section*{Data availability statement}
Data sharing not applicable to this article as no data sets were generated or analysed during the current study.

\section*{Acknowledgments}

This work was supported by the Guangzhou University Postgraduate Innovation Ability Training Program (Project No. JCCX2025065).
Chungen Liu was supported by the National Natural Science Foundation of China (Grant No. 12171108). 
\noindent Jiabin Zuo was supported by the Guangdong Basic and Applied Basic Research
Foundation (2026A1515012273).

\medskip
\bibliographystyle{plain}


\end{document}